\newtheorem{thm}{Theorem}[section]
\newtheorem{prop}[thm]{Proposition}
\newtheorem{cor}[thm]{Corollary}
\newtheorem{lem}[thm]{Lemma}
\theoremstyle{definition}
\newtheorem{defn}[thm]{Definition}
\newtheorem{Ex}[thm]{Example}
\theoremstyle{remark}
\newtheorem{Rmk}[thm]{Remark}
\numberwithin{equation}{section}
\newcommand{\Z}{\mathbb{Z}}
\newcommand{\Q}{\mathbb{Q}}
\newcommand{\A}{\mathbb{A}}
\newcommand{\g}{\mathfrak{g}}
\newcommand{\h}{\mathfrak{h}}
\newcommand{\n}{\mathfrak{n}}
\newcommand{\gl}{\mathfrak{gl}}
\newcommand{\Hom}{\mathrm{Hom}}
\newcommand{\Ht}{{\rm ht}}
\newcommand{\Ind}{{\rm Ind}}
\newcommand{\Res}{{\rm Res}}
\newcommand{\gmod}{{\rm \text{-}gmod}}
\newcommand{\proj}{\text{-}\mathrm{proj}}
\newcommand{\Mod}{\text{-}\mathrm{Mod}}
\newcommand{\rlQ}{\mathsf{Q}}   
\newcommand{\wlP}{\mathsf{P}}   
\newcommand{\cmA}{\mathsf{A}}  
\newcommand{\tf}{\tilde{f}}  
\newcommand{\te}{\tilde{e}}  
\newcommand{\cmp}{{\Delta_+}}		
\newcommand{\cmm}{{\Delta_-}}
\newcommand{\cmpm}{{\Delta_\pm}}
\newcommand{\cmn}{{\Delta_{\n}}}
\newcommand{\wt}{\mathrm{wt}} 		
\newcommand{\ep}{\varepsilon}  		
\newcommand{\ph}{\varphi}  		
\newcommand\Aq[1][{\mathfrak g}]{U_q^{-}(#1)^\vee} 
\newcommand\Aqq[1][{\mathfrak g}]{U_{\A}^{-}(#1)^\vee}
\newcommand{\ait}{\mathsf{t}}  		
\newcommand{\ST}{\mathsf{ST}}   
\newcommand{\res}{\mathsf{res}}   
\newcommand{\bR}{\mathbf{k}}   
\newcommand{\conv}{{\mathbin{\scalebox{1.1}{$\mspace{1.5mu}\circ\mspace{1.5mu}$}}}}
\newcommand{\hconv}{\mathbin{\scalebox{.9}{$\nabla$}}}
\newcommand{\sconv}{\mathbin{\scalebox{.9}{$\Delta$}}}
\renewcommand{\Im}{\operatorname{Im}}
\newcommand{\hd}{{\mathrm{hd}}}      					 
\newcommand{\fc}[1][\lambda]{{\Phi_{#1}}}  	
\newcommand{\dfc}[1][\lambda]{{\Psi_{#1}}}  	
\newcommand{\prj}{{\mathsf{p}}}  	
\newcommand{\inj}{{\mathsf{i}}}  	
\newcommand{\mRes}[1][\lambda]{{\Res_{#1}}}  	
\newcommand{\lan}{\langle} 	
\newcommand{\ran}{\rangle}	
\newcommand{\rmat}[1]{{\mathbf{r}}_%
{\mspace{-2mu}\raisebox{-.6ex}{${\scriptstyle{#1}}$}}}
\newcommand{\Dd}{\text{ \textfrak{d}}} 			
\newcommand{\Par}{\mathcal{P}}					
\newcommand{\SST}{\mathsf{SST}}					
\newcommand{\Sp}{\mathcal{S}}					
\newcommand{\qch}{\mathrm{ch}_q}					
\newcommand{\low}{{\rm low}}
\newcommand{\up}{{\rm up}}
\newcommand{\one}{{\bf 1}}
\newcommand{\sh}{\mathsf{sh}}
\begin{document}

\title[A remark on convolution products for quiver Hecke algebras]
{A remark on convolution products for quiver Hecke algebras}

\author[Myungho Kim]{Myungho Kim$^1$}
\thanks{$^1$ This work was supported by the National Research Foundation of
Korea(NRF) Grant funded by the Korea government(MSIP) (NRF-2017R1C1B2007824).}
\address{Department of Mathematics, Kyung Hee University, Seoul 02447, Korea }
\email{mkim@khu.ac.kr}

\author[Euiyong Park]{Euiyong Park$^2$}
\thanks{$^2$ This work was supported by the 2015 Research Fund of the University of Seoul.}
\address{Department of Mathematics, University of Seoul, Seoul 02504, Korea}
\email{epark@uos.ac.kr}

\subjclass[2010]{17B37 , 81R10, 16D60, 16D90}
\keywords{Categorification, Convolution products, Quantum groups, Quiver Hecke algebras, Tensor products}

\begin{abstract}

In this paper, we investigate a connection between convolution products for quiver Hecke algebras and tensor products for quantum groups.
We give a categorification of the natural projection
$
\pi_{\lambda, \mu} : V_\A(\lambda)^\vee \otimes_{ \A} V_\A(\mu)^\vee \twoheadrightarrow V_\A(\lambda+ \mu)^\vee
$
sending the tensor product of the highest weight vectors to the highest weight vector in terms of convolution products.
When the quiver Hecke algebra is symmetric and the base field is of characteristic $0$, 
 we obtain a positivity condition on some coefficients associated with the projection $\pi_{\lambda, \mu}$ and the upper global basis,
and prove several results related to the crystal bases. We then apply our results to finite type $A$ using the homogeneous simple modules $\Sp^T$ indexed by one-column tableaux $T$.
\end{abstract}

\maketitle


\vskip 2em

\section*{Introduction}

\emph{Quiver Hecke algebras} (or \emph{Khovanov-Lauda-Rouqiuer algebras}) were introduced to give a categorification of the half of a quantum group \cite{KL09, KL11,R08}.
These algebras have special graded quotients, called \emph{cyclotomic quiver Hecke algebras}, which categorify irreducible highest weight integrable modules \cite{KK11}. 
Cyclotomic quiver Hecke algebras of affine type $A$ are isomorphic to blocks of cyclotomic Hecke algebras \cite{BK09, R08}. 
In this sense, quiver Hecke algebras are a vast generalization of Hecke algebras in the direction of categorification. 
When quiver Hecke algebras are symmetric and the base field is of characteristic 0,
they have a geometric realization using quiver varieties which yields that 
the \emph{upper global basis} (\emph{dual canonical basis}) corresponds to the isomorphism classes of self-dual simple modules  \cite{R11,VV11}, 
and they also give a monoidal categorification for some quantum cluster algebras \cite{KKKO17}.
The study of quiver Hecke algebras has been one of the main research themes on quantum groups in a viewpoint of categorification.

Let $U_q(\g)$ be the quantum group associated with a generalized Cartan matrix $\cmA$ and $R(\beta)$ a quiver Hecke algebra associated with $\cmA$ and 
$\beta \in \rlQ_+$, where $\rlQ_+$ is the positive cone of the root lattice $\rlQ$.  
The convolution product $\circ$ and the restriction functor $\Res$ give twisted bialgebra structures on 
the Grothendieck groups $[R\proj]$ and $[R\gmod]$, where  $R\proj$ (resp.\  $R\gmod$) is the category of finitely generated projective 
(resp.\  finite-dimensional) graded $R$-modules. It was proved that there are bialgebra isomorphisms $U_\A^-(\g) \simeq [R\proj]$ and $U_\A^-(\g)^\vee \simeq [R\gmod]$, 
where $U_\A^-(\g)^\vee $ is the dual of $U_\A^-(\g)$.
Similarly, cyclotomic quiver Hecke algebras give a categorification for irreducible highest weight modules. 
For a dominant integral weight $\Lambda$, let $V_q(\Lambda)$ be the irreducible highest weight $U_q(\g)$-module with highest weight $\Lambda$ and 
$R^\Lambda(\beta) $ the cyclotomic quiver Hecke algebras of $R(\beta)$ corresponding to $\Lambda$. 
It was proved that there exist $U_\A(\g)$-module isomorphisms $V_\A(\Lambda) \simeq [R^\Lambda\proj]$ and $V_\A(\Lambda)^\vee \simeq [R^\Lambda\gmod]$.
Here $V_\A(\Lambda)^\vee$ is the dual of $V_\A(\Lambda)$, and
$R^\Lambda\proj$ (resp.\  $R^\Lambda\gmod$) is the category of finitely generated projective (resp.\  finite-dimensional) graded $R^\Lambda$-modules.

In this paper, we investigate a connection between convolution products for quiver Hecke algebras and tensor products for quantum groups. 
For dominant integral weights $\lambda$ and $\mu$, we consider a chain of homomorphisms
$$
U_\A^-(\g)  \buildrel \fc \over \rightarrowtail  U_\A^-(\g) \otimes U_\A^-(\g)  \buildrel {  \prj_{\lambda} \otimes \prj_{\mu}   } \over \twoheadrightarrow  V_{\A}(\lambda) \otimes_\A V_\A(\mu), 
$$
where $\fc$ is defined in $\eqref{Eq: fc}$ and $\prj_{\lambda}$ is the natural projection defined in $\eqref{Eq: proj_A}$.
We interpret the above homomorphisms in terms of the restriction functor $\Res$ in a viewpoint of categorification, which yields our main result, that is
a categorification of the surjective homomorphism
\begin{align*}
\pi_{\lambda, \mu} : V_\A(\lambda)^\vee \otimes_{ \A} V_\A(\mu)^\vee \twoheadrightarrow V_\A(\lambda+ \mu)^\vee ,
\end{align*}
where $\pi_{\lambda, \mu}$ is the natural projection sending the tensor product of the highest weight vectors to the highest weight vector.
More precisely, we prove that
\[
 \pi_{\lambda, \mu} ( [M] \otimes [N] ) = q^{(\beta_2, \lambda)}[M\conv N] 
\] 
for $M \in R^\lambda(\beta_1)\gmod$ and $N \in R^\mu(\beta_2)\gmod$ (Corollary \ref{Cor: main}). 
For types $A_\infty$ and $A_n^{(1)}$, the same connection was investigated in terms of affine Hecke algebras and upper global bases 
 in \cite{LNT03}. Note also that a categorification of tensor products of irreducible highest weight modules was studied in \cite{Web17}.

When $R$ is \emph{symmetric}, we study further with upper global bases and \emph{crystal bases}. 
We assume that $R$ is symmetric and the base field is of characteristic 0. 
Let $B(\lambda)$ be a crystal of $V_q(\lambda)$ and $b_\lambda$ the highest weight vector of $B(\lambda)$. 
For $b\in B(\lambda)$, we denote by $G^{\up}_\lambda(b)$ the member of 
the upper global base $\mathbf B^\up (\lambda)$ corresponding to $b$ and by $L(b)$  the self-dual simple $R$-module corresponding to $b$.
For $b \in B(\lambda)$ and $ b' \in B(\mu)$,  if we write   
$$
  \pi_{\lambda, \mu} ( G^{\up}_\lambda(b) \otimes G^{\up}_\mu(b') ) = \sum_{b'' \in B(\lambda+\mu)} A_{b''}(q)   G_{ \lambda+\mu  }^{\up}(b'') 
$$ 
for $A_{b''}(q) \in  \Q(q) $, then we prove  that 
$$
A_{b''}(q) = q^{(\beta_2, \lambda)} [L(b)\conv L(b') : L(b'')]_q ,
$$
 which yields the positivity $A_{b''}(q) \in \Z_{\ge 0}[q^\pm]$ (Corollary \ref{Cor: positivity}).
Let $C_{\lambda_{1}, \ldots, \lambda_r }$ be the connected component of $B(\lambda_1)\otimes \cdots \otimes B(\lambda_r) $ containing $b_{\lambda_1}\otimes \cdots \otimes b_{\lambda_r}$.
We then prove that  if 
\begin{enumerate}
\item[(i)] $b_1 \otimes \cdots \otimes b_r \in C_{\lambda_{1}, \ldots, \lambda_r }$,
\item[(ii)] $\hd( L(b_1) \conv \cdots \conv L(b_r) )$ is simple,
\end{enumerate}
then 
\[
   \, \hd( L(b_1) \conv \cdots \conv L(b_r) ) \simeq L( b_1 \otimes \cdots \otimes b_r )
\]
up to a grading shift (Theorem \ref{Thm: app_tensor}). When applying this result to a pair of real simple $R$-modules, we have more results.
Suppose that $ L(b_1)$ and $L(b_2)$ are real.  We then compute the degree $\Lambda(L(b_1), L(b_2))$ of the R-matrix and related quantities
 in terms of weights and dominant integral weights (Corollary \ref{Cor: app_tensor}). 
Moreover, if there is $b_1'\otimes b_2' \in C_{\lambda_1', \lambda_2'}$ such that $\lambda_1 + \lambda_2 = \lambda_1' + \lambda_2'$ and $b_1\otimes b_2$ is crystal equivalent to $b_1' \otimes b_2'$, then 
we show that 
$$
\Hom_{R\gmod} ( q^d L(b_1) \conv  L(b_2),  L(b'_2) \conv L(b'_1))  \simeq {\mathbf k},
$$
where $d$ is given in Theorem \ref{Thm: sR}. As a consequence, we have a couple of equivalent conditions to the condition that $L(b_1)$ and $L(b_2)$ strongly commute (Corollary \ref{Cor: app2}).  

As an application, we apply our results to finite type $A$. We compute several examples with the homogeneous simple $R^{\Lambda_i}$-modules $\Sp^T$ indexed by one-column tableaux $T$,
which were introduced in \cite{BK09,KR10}.
In Theorem \ref{Thm: KL poly}, we also explain that the graded decomposition number between $\Sp^{T_r} \conv \cdots \conv \Sp^{T_1} $ and simple $R$-modules 
in terms of the Kazhdan-Lusztig polynomials by using the results on the entries of  the transition matrix between the standard monomial and upper global bases of the irreducible $U_q(\gl_n)$-modules 
given in \cite{Brun06}. Note that these entries also appear in composition multiplicities of the standard modules for the finite $W$-algebras/shifted Yangians \cite{BK08}.

The paper is organized as follows. In Section \ref{Sec: quantum groups}, we review global bases and crystal bases of quantum groups. In Section \ref{Sec: quiver Hecke algebra},
we recall the categorification using quiver Hecke algebras. In Section \ref{Sec: tensor and convolution}, we investigate a connection between tensor products and convolution products, and provide the main theorems. In Section \ref{Sec: application}, we apply our main results to finite type $A$.

\vskip 2em

\section{Quantum groups}  \label{Sec: quantum groups}
\subsection{Quantum groups}
 Let $I$ be an index set.
A {\it Cartan datum} $ (\cmA,\wlP,\Pi,\wlP^\vee,\Pi^\vee) $
consists of
\begin{enumerate}
\item a matrix $\cmA=(a_{ij})_{i,j\in I}$, called the \emph{symmetrizable generalized Cartan matrix}, satisfying
\begin{enumerate}
\item $a_{ii}=2$ for $i \in I$  and $a_{ij} \in \Z_{\le 0}$ for $i \neq j$,
\item $a_{ij} = 0$ if and only if $a_{ji} = 0$,
\item there exists a diagonal matrix $D={\rm diag}(\mathsf d_i \mid i \in I)$ such that $D\cmA$ is symmetric, and $\mathsf d_i$'s are relatively prime positive integers,
\end{enumerate}
\item a free abelian group $\wlP$, called the {\em weight lattice},
\item $\Pi = \{ \alpha_i \mid i\in I \} \subset \wlP$,
called the set of {\em simple roots},
\item $\wlP^{\vee}=
\Hom_{\Z}( \wlP, \Z )$, called the \emph{coweight lattice},
\item $\Pi^{\vee} =\{ h_i \in \wlP^\vee \mid i\in I\}$, called the set of {\em simple coroots}, satisfying
\begin{enumerate}
\item $\lan h_i, \alpha_j \ran = a_{ij}$ for $i,j \in I$,
\item $\Pi$ is linearly independent over $\Q$,
\item for each $i\in I$, there exists $\Lambda_i \in \wlP$, called the \emph{fundamental weight}, such that $\lan h_j,\Lambda_i \ran =\delta_{j,i}$ for all $j \in I$.
\end{enumerate}
\end{enumerate}

Set $\mathfrak h=\Q \otimes_\Z \wlP$. We fix a nondegenerate symmetric bilinear form $( \cdot \, , \cdot )$ on $\mathfrak h^*$ satisfying 
\begin{equation} \label{eq:bilinear}
(\alpha_i,\alpha_j)=\mathsf d_i a_{ij} \quad (i,j \in I),\quad \text{and } \quad  \lan h_i,  \lambda\ran = \dfrac{2 (\alpha_i,\lambda)}{(\alpha_i,\alpha_i)} \quad (\lambda \in \mathfrak h^*, \ i \in I).
\end{equation}

Set 
$$
 \rlQ = \bigoplus_{i \in I} \Z \alpha_i, \quad \rlQ_+ = \sum_{i\in I} \Z_{\ge 0} \alpha_i, \quad \rlQ_- = -\rlQ_+.
$$
We call $\rlQ$ (repectively, $\rlQ_+$) the \emph{root lattice} (respectively, the \emph{positive root lattice}).
We define $\Ht (\beta)=\sum_{i \in I} k_i $  for $\beta=\sum_{i \in I} k_i \alpha_i \in \rlQ_+$,
and denote by $\wlP^+: =\{ \lambda \in \wlP \mid \lan h_i, \lambda\ran \ge 0 \ \text{for all }  \ i \in I \}$ the set of \emph{dominant integral weights}. 

Let $U_q(\g)$ be the quantum group associated with the Cartan datum $(\cmA, \wlP,\wlP^\vee \Pi, \Pi^{\vee})$, which
is a $\Q(q)$-algebra generated by $f_i$, $e_i$ $(i\in I)$ and $q^h$ $(h\in \wlP)$ with certain defining relations (see \cite[Chater 3]{HK02} for details).
Let $U_q(\g)_\beta = \{ x \in U_q(\g) \mid q^h xq^{-h} = q^{\lan h, \beta \ran} x \quad \text{for all} \ h \in \wlP^\vee\}$. Then we have
$U_q(\g) =\bigoplus_{\beta \in \rlQ} U_q(\g)_\beta$  and 
$U_q^-(\g) =\bigoplus_{\beta \in \rlQ_-} U_q(\g)_\beta$ .
In the sequel,  for a $\wlP$-graded vector space $V=\bigoplus_{\mu \in \wlP} V_{\mu}$, we denote 
by 
$$
V^\vee:=\bigoplus_{\mu \in \wlP} \Hom_{\Q(q)}(V_\mu, \Q(q))
$$ 
the {\it restricted dual} of $V$.
 We often write $\langle v, f \rangle = f(v)$ for $f\in V^\vee$ and $v \in V$ in pairing notation.  
The $\Q(q)$-algebra homomorphisms (comultiplications) $\cmpm: U_q(\g) \longrightarrow U_q(\g) \otimes_{\Q(q)} U_q(\g) $ are defined by
\begin{align*}
\cmp(f_i) &= f_i \otimes t_i^{-1}+ 1 \otimes f_i, \quad \cmp(e_i) = e_i \otimes 1 + t_i \otimes e_i, 
 \quad \ \ \cmp(q^h)=q^h \otimes q^h, 
\\
\cmm(f_i) &= f_i \otimes 1 + t_i \otimes f_i,\ \ \quad \cmm(e_i) = e_i \otimes t_i^{-1} + 1 \otimes e_i,
 \quad \cmm(q^h)=q^h \otimes q^h, 
\end{align*}
where $t_i = q^{\mathsf{d}_i  h_i }$.

For homogeneous elements  $x,y,z,w \in U_q^-(\g)$, we define 
\[
(x\otimes y)  \cdot  (z \otimes w) = q^{- (\wt(y), \wt(z))} xz \otimes yw,
\]
which gives   another  $\Q(q)$-algebra structure on $U_q^-(\g) \otimes_{\Q(q)}U_q^{-}(\g)$.
 Then we have a $\Q(q)$-algebra homomorphism $\cmn:U_q^-(\g)\to U_q^-(\g)\otimes_{\Q(q)}U_q^-(\g)$ with respect to the above multiplication, which is given by
\[
\cmn(f_i)= 1 \otimes f_i + f_i \otimes 1 \quad \text{for } \ i  \in I. 
\]
 We often write $\cmn(x) = x_{(1)}\otimes x_{(2)} $ for $x \in U_q^-(\g)$ in Sweedler's notation.
We set 
\[
\Aq :=  \bigoplus_{\beta \in \rlQ^-} \Aq_\beta,
\]  
and define, for $f,g \in \Aq$ and $u\in U_q^-(\g)$,
\[
(f \cdot g)(u) := (f \otimes g)( \cmn(u) ) = f(u_{(1)}) f(u_{(2)}), \quad \text{ where } \cmn(u) =  u_{(1)} \otimes u_{(2)}.
\]
Let $\A=\Z[q,q^{-1}]$, and $U_\A^-(\g)$ denote the subalgebra of $U_q^-(\g)$ generated by $f_i^{(n)} := f_i^n / [n]_i!$ for $i\in I$ and $n\in \Z_{\ge0}$,
where
\begin{align*}
q_i = q^{ \mathsf d_i }, \quad [n]_i =\frac{ q^n_{i} - q^{-n}_{i} }{ q_{i} - q^{-1}_{i} },\quad  [n]_i! = \prod^{n}_{k=1} [k]_i.
\end{align*}
Let 
\[
\Aqq := \{ f \in  \Aq \mid f( U_\A^-(\g)) \subset \A )\},
\]
which forms an $\A$-subalgebra of $\Aq$.

A $U_q(\g)$-module is called \emph{integrable} if $M=\bigoplus_{\lambda \in \wlP} M_\lambda$, where $M_\lambda:=\{ v \in M \mid q^h v = q^{\lan h, \lambda \ran} v \quad \text{for all} \ h \in \wlP^\vee \}$, $\dim_{\Q(q)} M_\mu < \infty$, and the actions of $e_i$ and $f_i$ on $M$ are locally nilpotent for all $i \in I$. We denote by $\mathcal O_{\rm int}(\g)$ the category of integrable left $U_q(\g)$-modules $M$ such that there exist finitely many $\lambda_1,\ldots,\lambda_m$ with $\wt(M) \subset \bigcup_{j} (\lambda_j-\rlQ_+)$.
Let $\ait$ be the $\Q(q)$-algebra anti-involution on $U_q(\g)$ given by $\ait(f_i) = e_i$, $\ait(e_i) = f_i$ and $\ait(q^h) = q^h$ for $i\in I$ and $h \in \wlP^\vee$.
Then  the restricted dual  $M^\vee$ of an integrable module $M$ has the left $U_q(\g)$-module structure defined by
\[
(xf)(m) := f( \ait(x) m ) \quad \text{ for $f\in M^\vee$, $m \in M$ and $x \in U_q(\g)$}.
\] 
Note that, using pairing notation, we have $\langle  m,  xf \rangle = \langle \ait(x) m, f \rangle$. 
For $\lambda \in \wlP^+$, let $V_q(\lambda)$ be the irreducible highest weight module of highest weight $\lambda$, and 
let $u_\lambda$ denote a highest weight vector of $V_q(\lambda)$. 
Note that, as $U_q(\g)$-modules, 
 \begin{align} \label{Eq: dual}
V_q(\lambda) \simeq V_q(\lambda)^\vee.
\end{align} 

We set  
\begin{align*}
V_\A(\lambda) := U_\A(\g) u_\lambda, \qquad 
V_\A(\lambda)^\vee   :=  \{ \phi \in V_q(\lambda)^\vee  \mid \phi( V_\A(\lambda)  \subset \A)  \}.
\end{align*}

Consider the natural projection map $\prj_\lambda: U_q^-(\g) \twoheadrightarrow V_q(\lambda)$ given by $\prj_\lambda(x) = x u_\lambda$. 
Restricting to  $U_\A^-(\g)$, we have
\begin{align} \label{Eq: proj_A}
\prj_\lambda|_{U_\A^-(\g)}: U_\A^-(\g) \twoheadrightarrow V_\A(\lambda).
\end{align}
We simply write $\prj_\lambda$ for $\prj_\lambda|_{U_\A^-(\g)}$. Restricting the dual map $  \inj _\lambda:=\prj_\lambda^\vee  : V_q(\lambda)^{  \vee  }  \rightarrowtail U_q^-(\g)^{  \vee  } $
to  $V_\A(\lambda)^\vee$, we have the following 
\begin{align} \label{Eq: Inj_A}
\inj_\lambda: V_\A(\lambda)^\vee \rightarrowtail \Aqq \subset  U_q^-(\g)^\vee.
\end{align}

For $U_q(\g)$-modules $M$ and $N$, we define $U_q(\g)$-module actions on $M \otimes_{\Q(q)} N$ by
\[
x \cdot (m\otimes n) := \Delta_{\pm}(x) (m \otimes n) \quad \text{for $x \in U_q(\g)$ and $m \otimes n \in M \otimes_{\Q(q)} N$,}
\]
which is denoted by $ M \otimes_\pm N$ as a $U_q(\g)$-module.
Note that, as a $U_q(\g)$-module,
\begin{align} \label{eq:MtensNdual}
M^\vee \otimes_{\mp} N^\vee \simeq (M \otimes_\pm N)^\vee,
\end{align}
where the isomorphisms are given by
\begin{align*} 
\lan f\otimes g, v\otimes w\ran := f(v)g(w)
\qquad  \text{for} \ f\in M^\vee, \, g\in N^\vee, \, v\in M, \text{and} \ w\in N. 
\end{align*}

\subsection{Global bases and crystal bases}
Let us recall the notions of global bases and crystal bases briefly (see \cite{Kas91, Kas93}, \cite[Chap.\ 4 and 6]{HK02} for details).
Let $\A_0$ be the subring of $\Q(q)$ consisting of rational functions which are regular at $q=0$.
For each $i \in I$, let us denote by $\tf_i$ and $\te_i$ the \emph{Kashiwara operators}  on $U_q^-(\g)$ given in \cite[(3.5.1)]{Kas91}, and 
set
\begin{align*}
&L(\infty) : = \sum_{l \in \Z_{\ge0}, \ i_1,\ldots, i_l \in I} \A_0 \tf_{i_1} \cdots \tf_{i_l} \one \subset U_q^-(\g),  \ \  \overline{L(\infty)} : = \{ \overline{x} \in U_q^-(\g) \mid x \in L(\infty) \}, \\
&B(\infty) : = \{ \tf_{i_1} \cdots \tf_{i_l} \one \mod qL(\infty) \mid l \in \Z_{\ge0}, \ i_1,\ldots, i_l \in I \} 
\subset L(\infty) /qL(\infty),
\end{align*}
where
$^- : U_q(\g) \buildrel \sim \over \rightarrow  U_q(\g)$ is the $\Q$-algerba automorphism given by $\overline{e_i}=e_i, \ \overline{f_i}=f_i, \ \overline{q^h}=q^{-h},\ \text{and} \  \overline{q}=q^{-1}$.
Then the triple $((\Q\otimes_\Z U_\A^-(\g)), L(\infty), \overline{L(\infty)})$ is \emph{balanced}; i.e.,  
the natural map $(\Q\otimes_\Z U_\A^-(\g)) \cap L(\infty) \cap \overline{L(\infty)} \to L(\infty)/qL(\infty)$ is a $\Q$-linear isomorphism. 
Denote the inverse  by $G^\low$. The set 
$$\mathbf B^\low (\infty) : =\{G^\low(b) \in U_\A^-(\g) \mid b \in B(\infty) \}$$
forms an $\A$-basis of $U_\A^-(\g)$ and called the \emph{lower global basis} of $U_q^-(\g)$. Then
we have the dual basis
$$\mathbf B^\up (\infty) : =\{G^\up(b)  \in U_\A^-(\g)^\vee \mid b \in B(\infty) \},$$
which forms an $\A$-basis of $U_\A^-(\g)^\vee$ called the  \emph{upper global basis} of $U_q^-(\g)$;
i.e., $\lan G^\low(b), G^\up(b')\ran = \delta_{b,b'}$ for $b, b' \in B(\infty).$

Let $\lambda \in \wlP^+$ and set
\begin{align*}
&L^\low(\lambda) : = \sum_{l \in \Z_{\ge0}, \ i_1,\ldots, i_l \in I} \A_0 \tf_{i_1} \cdots \tf_{i_l} u_\lambda \subset V_q(\lambda),  \quad  \overline{L^\low(\lambda)} : = \{ \overline{v} \in V_q(\lambda) \mid v \in L^\low(\lambda) \} \\
&B(\lambda) : = \{ \tf_{i_1} \cdots \tf_{i_l} u_\lambda \mod qL^\low(\lambda) \mid l \in \Z_{\ge0}, \ i_1,\ldots, i_l \in I \} 
\subset L^\low(\lambda) /qL(\lambda),
\end{align*}
where $^- : V_q(\lambda) \to V_q(\lambda)$ is the $\Q$-linear automorphism given by $\overline{xu_\lambda}:=\overline{x} u_\lambda$.
Denote the inverse of the $\Q$-linear isomorphism $(\Q\otimes_\Z V_\A(\lambda)) \cap L^\low (\lambda) \cap \overline{L^\low(\lambda)} \to L^\low(\lambda)/qL^\low(\lambda)$  by $G^\low_\lambda$. Then the set 
$$\mathbf B^\low (\lambda) : =\{G_\lambda^\low(b) \mid b \in B(\lambda) \}$$
forms an $\A$-basis of $V_\A(\lambda)$ called the \emph{lower global basis} of $V_q(\lambda)$. Similarly, we have
the dual basis
$$\mathbf B^\up (\lambda) : =\{G^\up_\lambda(b) \in V_\A(\lambda)^\vee \mid b \in B(\lambda) \},$$
which forms an $\A$-basis of $V_\A(\lambda)^\vee$ called the  \emph{upper global basis} of $V_q(\lambda)$; i.e.,
$\lan G_\lambda^\low(b), G_\lambda^\up(b')\ran = \delta_{b,b'}$ for $b, b' \in B(\lambda)$.
Set $L^{up} (\lambda):=\{f \in V_q(\lambda)^\vee \mid \lan f, L^\low(\lambda) \ran  \subset \A_0\}$.

The global basis of $U_q^-(\g)$ and that of $V_q(\lambda)$ are compatible in the following sense:
\begin{thm} {\rm(\cite{Kas91})} We have
$$\prj_\lambda L(\infty) = L^\low(\lambda). $$
The induced surjectve map $ \overline{\prj}_\lambda:L(\infty) / qL(\infty) \to L^\low(\lambda) /  qL^\low(\lambda)$ is a bijection between $\{b \in B(\infty) \mid \overline{\prj}_\lambda(b) \neq 0\}$ and $B(\lambda)$.
If $\overline{\prj}_\lambda (b) \neq 0$, then 
$\prj_\lambda G^\low(b)= G^\low(b) u_\lambda = G_\lambda^\low(\overline{\prj}_\lambda(b))$.
Taking duals,  the map $\inj_\lambda$ induces an injective $\A_0$-linear map from $L^\up(\lambda)$ to $L^\up(\infty)$ and
for $b \in B(\lambda)$ we have
$$\inj_\lambda G_\lambda^\up(b)=  G^\up(\overline{\inj}_\lambda(b)),$$
where $\overline{\inj}_\lambda$ denotes the inverse of $\overline{\prj}_\lambda$ on $B(\lambda)$.
\end{thm}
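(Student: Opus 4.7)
The plan is to prove the four statements in order: first the inclusion on crystal lattices, then the bijection on crystal bases modulo $q$, then the identity on lower global bases, and finally the dual statement. The backbone is the observation that $\prj_\lambda$ intertwines the Kashiwara operators modulo $q$ and commutes with the bar involution, after which the rest is bookkeeping using the uniqueness built into the balanced triple definition of the global basis.

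For the first two statements, I would argue by induction on the height $\ell = \Ht(\lambda - \wt(b))$. The base case $\ell=0$ is trivial since $\prj_\lambda(\one) = u_\lambda$. For the inductive step, write a generic generator of $L(\infty)$ as $\tf_{i_1}\cdots\tf_{i_l}\one$. The key technical fact needed is that, for $u \in L(\infty)$,
\[
\prj_\lambda(\tf_i u) \equiv \tf_i\,\prj_\lambda(u) \pmod{qL^\low(\lambda)},
\]
together with a similar compatibility for $\te_i$ on the image. This compatibility is where the genuine work lies, and granting it, induction immediately gives $\prj_\lambda L(\infty) \subseteq L^\low(\lambda)$, the inclusion $\prj_\lambda L(\infty)=L^\low(\lambda)$ by surjectivity onto generators of $L^\low(\lambda)$, and the bijection onto $B(\lambda)$ once we remove the kernel $\{b : \overline{\prj}_\lambda(b)=0\}$.

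The third statement—identification of $\prj_\lambda G^\low(b)$ with $G^\low_\lambda(\overline{\prj}_\lambda(b))$—is then almost automatic. Since $\overline{xu_\lambda}=\overline{x}u_\lambda$ by definition, the projection $\prj_\lambda$ intertwines the bar involutions on $U_q^-(\g)$ and $V_q(\lambda)$; since $\prj_\lambda(U_\A^-(\g)) \subseteq V_\A(\lambda)$; and by the first part, $\prj_\lambda(L(\infty)) \subseteq L^\low(\lambda)$. Consequently, when $\overline{\prj}_\lambda(b) \neq 0$, the element $\prj_\lambda G^\low(b)$ lies in
\[
(\Q \otimes_\Z V_\A(\lambda)) \cap L^\low(\lambda) \cap \overline{L^\low(\lambda)}
\]
and reduces modulo $q$ to $\overline{\prj}_\lambda(b) \in B(\lambda)$. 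The balanced triple property for $V_q(\lambda)$ then forces the equality $\prj_\lambda G^\low(b) = G^\low_\lambda(\overline{\prj}_\lambda(b))$.

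The dual statement follows by transposing. For $f \in L^\up(\lambda)$ and any $u \in L(\infty)$, the containment $\prj_\lambda(L(\infty)) \subseteq L^\low(\lambda)$ gives
\[
\langle u, \inj_\lambda f\rangle = \langle \prj_\lambda u, f\rangle \in \A_0,
\]
so $\inj_\lambda(L^\up(\lambda)) \subseteq L^\up(\infty)$, and $\inj_\lambda$ is injective since $\prj_\lambda$ is surjective. The identity $\inj_\lambda G^\up_\lambda(b) = G^\up(\overline{\inj}_\lambda(b))$ is obtained by pairing against $\mathbf{B}^\low(\infty)$: for any $b' \in B(\infty)$,
\[
\langle G^\low(b'),\, \inj_\lambda G^\up_\lambda(b)\rangle
= \langle \prj_\lambda G^\low(b'),\, G^\up_\lambda(b)\rangle
= \delta_{\overline{\prj}_\lambda(b'),\,b}
= \delta_{b',\,\overline{\inj}_\lambda(b)}
\]
(using step three and the convention that $\overline{\prj}_\lambda(b')=0$ is not paired with anything in $B(\lambda)$), which characterises $G^\up(\overline{\inj}_\lambda(b))$. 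The main obstacle throughout is the mod-$q$ compatibility of the Kashiwara operators with $\prj_\lambda$; once granted, the remainder is formal manipulation with the balanced triple and duality.
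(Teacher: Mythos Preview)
The paper does not supply a proof of this theorem: it is quoted from \cite{Kas91} and used as a black box. So there is no ``paper's own proof'' to compare your sketch against.

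That said, your outline is the standard one and is essentially correct. A few remarks. First, you are right that the only genuine content is the mod-$q$ compatibility $\prj_\lambda(\tf_i u)\equiv\tf_i\,\prj_\lambda(u)$; everything else is formal. But note that this compatibility is not a triviality: the Kashiwara operators on $U_q^-(\g)$ are defined via the $e_i'$-decomposition, while on $V_q(\lambda)$ they are defined via the $\mathfrak{sl}_2$-string decomposition, and reconciling these is exactly what occupies a large part of \cite{Kas91}. You flag this honestly, so no objection there.

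Second, there is a small gap in your dual argument. In the pairing computation
\[
\langle G^\low(b'),\,\inj_\lambda G^\up_\lambda(b)\rangle
=\langle \prj_\lambda G^\low(b'),\,G^\up_\lambda(b)\rangle
=\delta_{b',\,\overline{\inj}_\lambda(b)}
\]
you implicitly use that $\prj_\lambda G^\low(b')=0$ whenever $\overline{\prj}_\lambda(b')=0$. Your parenthetical ``convention that $\overline{\prj}_\lambda(b')=0$ is not paired with anything'' does not establish this; one needs the stronger fact that the lower global basis element itself dies under $\prj_\lambda$, not merely its image in $L(\infty)/qL(\infty)$. This is true (and is part of Kashiwara's theorem), but it does not follow from the balanced-triple argument you gave for the nonvanishing case, since that argument only shows $\prj_\lambda G^\low(b')\in qL^\low(\lambda)$, not that it is zero. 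One way to close the gap is to observe that $\{\prj_\lambda G^\low(b):\overline{\prj}_\lambda(b)\neq 0\}$ already spans $V_\A(\lambda)$ (being the lower global basis of $V_q(\lambda)$), whence the remaining $\prj_\lambda G^\low(b')$ must vanish by a rank count on each weight space.
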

 In the sequel, we will omit the maps $\inj_\lambda$, $\overline{\inj}_\lambda$ and identify  $G^\up(\overline{\inj}_\lambda(b))$ with $G_\lambda^\up(b)$  if there is no afraid of confusion.
 \medskip

For $M \in \mathcal O_{\rm int} (\g)$, a pair $(L,B)$ of 
a free $\A_0$-submodule $L$ of $M$ and a $\Q$-basis $B$ of $L/qL$  is called a \emph{lower crystal base of $M$} if $L$ and $B$ are stable under the Kashiwara operator $\te_i, \tf_i$ with some conditions (see \cite[Chap.\ 4]{HK02}).
The set $B$ equips with an $I$-colored oriented graph structure by setting $b \buildrel i \over \rightarrow  \tf_ib$ for $b, \tf_i(b) \in B$.
When $(L_1,B_1)$ and $(L_2,B_2)$ are crystal bases of $M_1$ and $M_2$ in $\mathcal O_{\rm int} (\g)$, the pair $(L_1\otimes_{\A_0} L_2, B_1\otimes B_2)$ is a crystal basis of $M_1\otimes_- M_2$ , where $B_1 \otimes B_2 := \{b_1 \otimes b_2 \mid b_i \in B_i, \ (i=1,2) \} \subset L_1\otimes_{\A_0} L_2 / q(L_1\otimes_{\A_0} L_2)$ (\cite{Kas91}).
The Kashiwara operator on $B_1\otimes B_2$ can be described  explicitly as  follows.
\begin{defn} 
For $i\in I$, $b_1 \in B_1$ and $b_2 \in B_2$, 
\begin{align*} 
\wt(b_1 \otimes b_2) &= \wt(b_1) + \wt(b_2), \\
\ep_i(b_1 \otimes b_2) &= \max(  \ep_i(b_1), \ep_i(b_2) - \langle h_i, \wt(b_1) \rangle ),\\
\ph_i(b_1 \otimes b_2) &= \max(  \ph_i(b_1)+  \langle h_i, \wt(b_2) \rangle, \ph_i(b_2) ),\\
\te_i(b_1 \otimes b_2) &= 
\left\{
\begin{array}{ll}
\te_i(b_1) \otimes b_2 & \text{ if } \ph_i(b_1) \ge \ep_i(b_2),\\
b_1 \otimes \te_i(b_2) & \text{ if } \ph_i(b_1) < \ep_i(b_2),
\end{array}
\right.\\
\tf_i(b_1 \otimes b_2) &= 
\left\{
\begin{array}{ll}
\tf_i(b_1) \otimes b_2 & \text{ if } \ph_i(b_1) > \ep_i(b_2),\\
b_1 \otimes \tf_i(b_2) & \text{ if } \ph_i(b_1) \le \ep_i(b_2).
\end{array}
\right.
\end{align*}
\end{defn}

In particular, the pair 
$(L^\low(\lambda_1) \otimes_{\A_0} \cdots \otimes_{\A_0}L^\low(\lambda_r), B(\lambda_1)\otimes \cdots \otimes B(\lambda_r))$ is a crystal basis of 
 $V_q(\lambda_1)\otimes_-\cdots \otimes_- V_q(\lambda_r)$ 
for $\lambda_k \in \wlP^+$  $  (k=1,\ldots,r)  $.
We set
\begin{align*}
\text{$C_{\lambda_{1}, \ldots, \lambda_r }$ := the connected component of $B(\lambda_1)\otimes \cdots \otimes B(\lambda_r)$ containing $b_{\lambda_1}\otimes \cdots \otimes b_{\lambda_r} $.} 
\end{align*}

Let $B, B'$ be crystals. 
For $b\in B$ and $b' \in B'$, we say that \emph{$b$ is crystal equivalent to $b'$}, which is denoted by 
$b \simeq b'$,  if there exists an isomorphism between $C(b)$ and $C(b')$ sending $b$ to $b'$,
where $C(b)$ and $C(b')$ are the connected components of $B$ and $B'$ containing $b$ and $b'$ respectively.

We recall the following proposition  (\cite{Kas90, Kas91}, \cite[Proposition 3.29]{Kimura12}), which plays an important role later. 
\begin{prop}   \label{Prop: global}
Let $\lambda_k \in \wlP^+$ and $b_k \in B(\lambda_k)$ for $k=1, \ldots, r$, and set $\lambda = \sum_{k=1}^r \lambda_k$.
If $b_1 \otimes \cdots \otimes b_r \in C_{\lambda_{1}, \ldots, \lambda_r} $, then 
 \[
\pi_{\lambda_{1}, \ldots, \lambda_r} (  G^{\up}_{\lambda_1}(b_1) \otimes \cdots \otimes  G^{\up}_{\lambda_r}(b_r) )
\equiv  G^{\up}_{\lambda}(b_1 \otimes \cdots \otimes  b_r) \qquad {\mathrm{mod} \ \ qL^{\up} (\lambda), }
\]
where $\pi_{\lambda_{1}, \ldots, \lambda_r}: V_q(\lambda_1)^\vee\otimes_+ \cdots \otimes_+ V_q(\lambda_r)^\vee \to V_q(\lambda)^\vee $ is the dual map of the $U_q(\g)$-module homomorphism $\iota_{\lambda_{1}, \ldots, \lambda_r}: V_q(\lambda)\to V_q(\lambda_1)\otimes_-\cdots \otimes_-V_q(\lambda_r)$ which is given by
$\iota_{\lambda_{1}, \ldots, \lambda_r}(u_{\lambda})=(u_{\lambda_1}\otimes \cdots \otimes u_{\lambda_r}).$
\end{prop}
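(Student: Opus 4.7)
My plan is to prove the statement by dualizing a corresponding assertion about $\iota:=\iota_{\lambda_1,\ldots,\lambda_r}$ and the lower global bases, through the natural pairing. By Kashiwara's theorem on tensor products of crystal bases for modules in $\Oint(\g)$, the $U_q(\g)$-module homomorphism $\iota$ sends $L^\low(\lambda)$ into $L^\low(\lambda_1)\otimes_{\A_0}\cdots\otimes_{\A_0} L^\low(\lambda_r)$, and the induced crystal map $\bar\iota$ identifies $B(\lambda)$ with the connected component $C_{\lambda_1,\ldots,\lambda_r}$ (using that $\iota(u_\lambda)=u_{\lambda_1}\otimes\cdots\otimes u_{\lambda_r}$, that $V_q(\lambda)$ is irreducible, and that $V_q(\lambda)$ appears in $V_q(\lambda_1)\otimes_-\cdots\otimes_- V_q(\lambda_r)$ with multiplicity one). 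Under the hypothesis $b_1\otimes\cdots\otimes b_r\in C_{\lambda_1,\ldots,\lambda_r}$ there is thus a unique $b\in B(\lambda)$ with $\bar\iota(b)=b_1\otimes\cdots\otimes b_r$, and this is the element that should appear on the right-hand side of the desired congruence.

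The key intermediate step is the congruence
\[
\iota\bigl(G^\low_\lambda(b)\bigr)\equiv G^\low_{\lambda_1}(b_1)\otimes\cdots\otimes G^\low_{\lambda_r}(b_r)\pmod{q\bigl(L^\low(\lambda_1)\otimes_{\A_0}\cdots\otimes_{\A_0} L^\low(\lambda_r)\bigr)}.
\]
Both sides lie in the tensor of lower crystal lattices by the preceding step, and both reduce modulo $q$ to the same crystal element $b_1\otimes\cdots\otimes b_r$: the left side because $G^\low_\lambda(b)$ lifts $b\in B(\lambda)$ and $\bar\iota(b)=b_1\otimes\cdots\otimes b_r$, the right side directly by the defining property of each $G^\low_{\lambda_i}(b_i)$.

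To conclude I would dualize. Using \eqref{eq:MtensNdual}, the identity $\pi_{\lambda_1,\ldots,\lambda_r}=\iota^\vee$, and the pairing factorization $\langle v_1\otimes\cdots\otimes v_r,\,f_1\otimes\cdots\otimes f_r\rangle=\prod_k\langle v_k,f_k\rangle$, expand
\[
\pi_{\lambda_1,\ldots,\lambda_r}\bigl(G^\up_{\lambda_1}(b_1)\otimes\cdots\otimes G^\up_{\lambda_r}(b_r)\bigr)=\sum_{b''\in B(\lambda)}c_{b''}(q)\,G^\up_\lambda(b''),
\]
and evaluate the pairing against $G^\low_\lambda(b')$ for each $b'\in B(\lambda)$ with $\bar\iota(b')=b_1'\otimes\cdots\otimes b_r'$. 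One side yields $c_{b'}(q)$, while pushing $G^\low_\lambda(b')$ through $\iota$ and invoking the intermediate congruence produces $\prod_{k=1}^{r}\delta_{b_k,b_k'}$ modulo $q\A_0$. This forces $c_b(q)\equiv 1$ and $c_{b'}(q)\equiv 0$ for $b'\neq b$ modulo $q\A_0$, which translates exactly into the stated congruence in $L^\up(\lambda)/qL^\up(\lambda)$.

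The main obstacle is the crystal-level compatibility of the first paragraph, in particular the equality $\bar\iota(B(\lambda))=C_{\lambda_1,\ldots,\lambda_r}$. This requires both that $\iota$ is compatible with the Kashiwara operators (so $\bar\iota(B(\lambda))$ is contained in the connected component of $b_{\lambda_1}\otimes\cdots\otimes b_{\lambda_r}$) and that $V_q(\lambda)$ occurs in the tensor product with multiplicity one (so this containment is an equality). Once this structural ingredient from Kashiwara's theory is in place, the remaining steps are essentially bookkeeping on the two sides of the pairing.
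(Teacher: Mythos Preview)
The paper does not give its own proof of this proposition; it is simply recalled from \cite{Kas90, Kas91} and \cite[Proposition 3.29]{Kimura12}. Your argument is correct and is essentially the standard one found in those references: use Kashiwara's tensor-product theorem to see that $\iota$ preserves the lower crystal lattices and induces the crystal isomorphism $B(\lambda)\simeq C_{\lambda_1,\ldots,\lambda_r}$, read off the congruence for $\iota(G^\low_\lambda(b))$ modulo $q$ times the tensor of lower lattices, and then dualize through the pairing. One small point worth making explicit in your write-up: the conclusion $c_{b'}(q)\equiv\delta_{b,b'}\pmod{q\A_0}$ really does yield the stated congruence in $L^{\up}(\lambda)$ because the upper global basis is an $\A_0$-basis of $L^{\up}(\lambda)$, so that $\pi_{\lambda_1,\ldots,\lambda_r}(G^{\up}_{\lambda_1}(b_1)\otimes\cdots\otimes G^{\up}_{\lambda_r}(b_r))$ lands in $L^{\up}(\lambda)$ in the first place and the coefficientwise congruence is equivalent to the asserted one.
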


\vskip 2em 

\section{Quiver Hecke algebras}  \label{Sec: quiver Hecke algebra}
\subsection{Quiver Hecke algebras}

Let $\mathbf k$ be a field and let $(\cmA,\wlP, \Pi,\wlP^{\vee},\Pi^{\vee})$ be a Cartan datum.
 Let us take  a family of polynomials $(Q_{i,j})_{i,j\in I}$ in ${\mathbf k}[u,v]$
which satisfies
\begin{enumerate}
\item $Q_{i,j}(u,v)=Q_{j,i}(v,u)$ for any $i,j\in I$,
\item $Q_{i,j}(u,v)=0$ for $i=j\in I$,
\item for $i\not=j$, we have
$$
Q_{i,j}(u,v) =
\sum\limits_{ \substack{ (p,q)\in \Z^2_{\ge0} 
\\ (\alpha_i , \alpha_i)p+(\alpha_j , \alpha_j)q=-2(\alpha_i , \alpha_j)}}
t_{i,j;p,q} u^p v^q
$$
with $t_{i,j;p,q}\in{\mathbf k}$, $t_{i,j;p,q}=t_{j,i;q,p}$ and $t_{i,j:-a_{ij},0} \in {\mathbf k}^{\times}$, 
where $(\ , \ )$ is the symmetric bilinear form in \eqref{eq:bilinear}.
\end{enumerate}
We denote by
${\mathfrak{S}}_{n} = \langle s_1, \ldots, s_{n-1} \rangle$ the symmetric group
on $n$ letters, where $s_i= (i, i+1)$ is the transposition of $i$ and $i+1$.
Then ${\mathfrak{S}}_n$ acts on $I^n$ by place permutations.
For $n \in \Z_{\ge 0}$ and $\beta \in \rlQ_+$ such that $\Ht(\beta) = n$, we set
$$I^{\beta} = \{\nu = (\nu_1, \ldots, \nu_n) \in I^{n}\mid \alpha_{\nu_1} + \cdots + \alpha_{\nu_n} = \beta\}.$$

For $\beta \in \rlQ_+$ with $\Ht(\beta)=n$, the \emph{quiver Hecke algebra}  
$R(\beta)$  at $\beta$ associated
with a matrix
$(Q_{i,j})_{i,j \in I}$ is the ${\mathbf k}$-algebra generated by
the elements $\{ e(\nu) \}_{\nu \in  I^{\beta}}$, $ \{x_k \}_{1 \le
k \le n}$, $\{ \tau_m \}_{1 \le m \le n-1}$ satisfying 
\begin{align*} 
& e(\nu) e(\nu') = \delta_{\nu, \nu'} e(\nu), \ \
\sum_{\nu \in  I^{\beta} } e(\nu) = 1, \allowdisplaybreaks\\
& x_{k} x_{m} = x_{m} x_{k}, \ \ x_{k} e(\nu) = e(\nu) x_{k}, \allowdisplaybreaks\\
& \tau_{m} e(\nu) = e(s_{m}(\nu)) \tau_{m}, \ \ \tau_{k} \tau_{m} =
\tau_{m} \tau_{k} \ \ \text{if} \ |k-m|>1, \allowdisplaybreaks\\
& \tau_{k}^2 e(\nu) = Q_{\nu_{k}, \nu_{k+1}} (x_{k}, x_{k+1})
e(\nu), \allowdisplaybreaks\\
& (\tau_{k} x_{m} - x_{s_k(m)} \tau_{k}) e(\nu) = \begin{cases}
-e(\nu) \ \ & \text{if} \ m=k, \nu_{k} = \nu_{k+1}, \\
e(\nu) \ \ & \text{if} \ m=k+1, \nu_{k}=\nu_{k+1}, \\
0 \ \ & \text{otherwise},
\end{cases} \allowdisplaybreaks\\
& (\tau_{k+1} \tau_{k} \tau_{k+1}-\tau_{k} \tau_{k+1} \tau_{k}) e(\nu)\\
& =\begin{cases} \dfrac{Q_{\nu_{k}, \nu_{k+1}}(x_{k},
x_{k+1}) - Q_{\nu_{k}, \nu_{k+1}}(x_{k+2}, x_{k+1})} {x_{k} -
x_{k+2}}e(\nu) \ \ & \text{if} \
\nu_{k} = \nu_{k+2}, \\
0 \ \ & \text{otherwise}.
\end{cases}
\end{align*}

Note that $R(\beta)$ has the  $\Z$-grading defined by 
\begin{equation*} \label{eq:Z-grading}
\deg e(\nu) =0, \quad \deg\, x_{k} e(\nu) = (\alpha_{\nu_k}
, \alpha_{\nu_k}), \quad\deg\, \tau_{l} e(\nu) = -
(\alpha_{\nu_l} , \alpha_{\nu_{l+1}}).
\end{equation*}

For a $\Z$-graded algebra $A$ over $\mathbf k$, let us denote by $A \Mod$ the category of  graded left modules over $A$ with homogeneous homomorphism. 
Let us denote by $A \proj$ (respectively, $A \gmod$) the full subcategory of $A \Mod$ consisting of
finitely generated projective graded $A$-modules (respectively, finite-dimensional graded $A$-modules). 
Set $R\proj := \bigoplus_{\beta \in \rlQ_+}R(\beta) \proj$ and 
$R\gmod := \bigoplus_{\beta \in \rlQ_+}R(\beta) \gmod$.

For $M\in R(\beta)\gmod$, the $q$-character of $M$ is given by 
$$
\qch(M) = \sum_{\nu \in I^\beta} \dim_q(e(\nu)M) \nu,
$$
where $ \dim_q V := \sum_{n\in \Z} q^n \dim V_n$ for a $\Z$-graded vector space $V = \bigoplus_{k \in \Z} V_k$.
 For an $R(\beta)$-module $M=\bigoplus_{k \in \Z} M_k$, we define
$qM =\bigoplus_{k \in \Z} (qM)_k$, where
 \begin{align*}
 (qM)_k  :=M_{k-1} & \quad \text{for all } \ k \in \Z.
 \end{align*}
We call $q$ the \emph{grading shift functor} on the category of
graded $R(\beta)$-modules.

For $\beta, \gamma \in \rlQ_+$,  we set $e(\beta,\gamma):=\displaystyle\sum_{ \nu_1 \in I^{\beta},  \nu_2 \in I^{\gamma} } e(\nu_1*\nu_2)$, where $\nu_1*\nu_2$ is 
the concatenation of $\nu_1$ and $\nu_2$. The \emph{induction functor} is defined as 
\begin{eqnarray*}
\Ind_{\beta,\gamma} : R(\beta) \otimes R(\gamma) \Mod &\to& R(\beta+\gamma) \Mod \\
V& \mapsto& R(\beta+\gamma)e(\beta,\gamma) \otimes_{R(\beta) \otimes R(\gamma)}  V.
\end{eqnarray*}
We often write 
$$
M\conv N =\Ind_{\beta,\gamma} (M\otimes N),
$$
for $M \in R(\beta)\Mod$ and $N\in R(\gamma)\Mod$, which is called the \emph{convolution product}
of $M$ and $N$.
We denote by $M\hconv N$ and  $M\sconv N$ the \emph{head} and the \emph{socle} of $M \conv N$, respectively.
For simple modules $M, N \in R\gmod$, we say that $M$ and $N$ \emph{strongly commute} if $M\conv N$ is simple. We say that a simple module $L$ is \emph{real} if $L$ strongly commutes with itself.
The \emph{restriction functor} 
$$\Res_{\beta,\gamma} : R(\beta+\gamma) \Mod \to R(\beta) \otimes R(\gamma) \Mod$$
is defined as
 $$
\Res_{\beta,\gamma} (W) : = e(\beta,\gamma)W\simeq e(\beta,\gamma)R(\beta+\gamma) \otimes_{R(\beta+\gamma)} W
  $$
for $W \in R(\beta+\gamma) \Mod$. Note that $\Res_{\beta,\gamma}$ is a  right adjoint of  $\Ind_{\beta,\gamma}$.

Let $\psi$ be the algebra anti-involution on $R(\beta)$ which fixes the generators.
Then $\psi $ gives a left $R$-module structure on the linear dual  $M^\star=\Hom_{\mathbf k}(M,{\mathbf k})$ of a module $M \in R\gmod$.
We call $M$ is  \emph{self-dual} if $M \simeq M^\star$.
 The \emph{Khovanov-Lauda paring} (\cite[(2.43)]{KL09})
$$
(\ , \ ): [R\proj] \times [R\gmod] \longrightarrow \A
$$
 is defined by $$([P], [M]) := \dim_q P^{  \psi } \otimes_R M$$ for $P\in R\proj$ and $M\in R\gmod$.
Here $P^\psi$ is the right $R$-module induced from $P$ with the actions twisted by $\psi$.  It also defines a paring  $[R\proj]\otimes[R\proj] \times [R \gmod]\otimes[R \gmod] \to \A$
by 
\begin{equation}
([X_1]\otimes[X_2], [Y_1]\otimes [Y_2]) := ([X_1],[Y_1])([X_2],[Y_2]) 
\end{equation}
for $ X_1,X_2 \in [R\proj], Y_1,Y_2 \in [R \gmod]$.

\begin{lem} {\rm(\cite[Proposition 3.3]{KL09})} \label{Lem: Ind-Res}
For $P\in R\proj$ and $M,N \in R\gmod$, we have
\[
([P], [M \conv N]) = ([\Res (P)], [M]\otimes[N]).
\]
\end{lem}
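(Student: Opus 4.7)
The plan is to unwind both sides of the equality into explicit $q$-graded dimensions of tensor products and then use associativity of tensor product together with the anti-involution $\psi$ to identify them.

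First I would substitute the definition of the pairing directly: by construction,
\[
([P],[M\conv N]) \;=\; \dim_q \bigl( P^{\psi} \otimes_{R(\beta+\gamma)} (M\conv N)\bigr),
\]
and then replace $M\conv N$ by its defining formula $\Ind_{\beta,\gamma}(M\otimes N) = R(\beta+\gamma)e(\beta,\gamma)\otimes_{R(\beta)\otimes R(\gamma)} (M\otimes N)$. Associativity of the tensor product collapses $P^{\psi}\otimes_{R(\beta+\gamma)} R(\beta+\gamma)e(\beta,\gamma)$ to the right $R(\beta)\otimes R(\gamma)$-module $P^{\psi} e(\beta,\gamma)$, giving
\[
P^{\psi} \otimes_{R(\beta+\gamma)} (M\conv N)
\;\simeq\; P^{\psi}e(\beta,\gamma)\otimes_{R(\beta)\otimes R(\gamma)} (M\otimes N)
\]
as graded vector spaces.

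The second step is to recognize $P^{\psi}e(\beta,\gamma)$ as the $\psi$-twist of the left $R(\beta)\otimes R(\gamma)$-module $e(\beta,\gamma)P = \Res_{\beta,\gamma}(P)$. Here I use that $\psi$ fixes the generators of $R$, hence fixes the idempotent $e(\beta,\gamma)$, and swaps the left/right structures compatibly with the decomposition $R(\beta)\otimes R(\gamma)\hookrightarrow R(\beta+\gamma)$. Granting this, I get
\[
P^{\psi} e(\beta,\gamma) \;\simeq\; \bigl(e(\beta,\gamma)P\bigr)^{\psi} \;=\; \Res_{\beta,\gamma}(P)^{\psi}
\]
as a graded right $R(\beta)\otimes R(\gamma)$-module, with no grading shift since $\psi$ is degree-preserving.

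Combining the two identifications, the left-hand side equals
\[
\dim_q \bigl( \Res_{\beta,\gamma}(P)^{\psi} \otimes_{R(\beta)\otimes R(\gamma)} (M\otimes N) \bigr),
\]
which is exactly $([\Res(P)],[M]\otimes[N])$ by the definition of the pairing on $R(\beta)\otimes R(\gamma)$ extended multiplicatively. The main obstacle, and the only substantive point, is the bookkeeping in the second step: one must check carefully that $\psi$ restricts well to the subalgebra $R(\beta)\otimes R(\gamma)e(\beta,\gamma) \subset R(\beta+\gamma)$ and that the induced identification $P^{\psi}e(\beta,\gamma)\simeq \Res(P)^{\psi}$ is an isomorphism of graded right modules, so that no spurious grading shift appears; everything else is formal manipulation with tensor products and the definition of $\Ind_{\beta,\gamma}$.
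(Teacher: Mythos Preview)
The paper does not supply its own proof of this lemma; it simply quotes \cite[Proposition 3.3]{KL09}. Your argument is correct and is essentially the standard one (and the one given in \cite{KL09}): unwind the pairing, collapse $P^{\psi}\otimes_{R(\beta+\gamma)} R(\beta+\gamma)e(\beta,\gamma)$ to $P^{\psi}e(\beta,\gamma)$, and observe that since $\psi$ fixes $e(\beta,\gamma)$ this is $(\Res_{\beta,\gamma}P)^{\psi}$ as a graded right $R(\beta)\otimes R(\gamma)$-module.
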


The Grothendieck groups $[R\proj]$ and $[R\gmod]$ admit $\A$-algebra structures with the multiplication given by the functor 
$\Ind:=\bigoplus_{\beta,\gamma \in \rlQ_+}\Ind_{\beta,\gamma}$.

\begin{thm} {\rm(\cite{KL09, KL11, R08})}\label{Thm: categorification}
There exists an $\A$-algebra isomorphism 
\begin{eqnarray*}
\gamma : U_\A^-(\g) &\buildrel \sim \over \rightarrow& [R \proj] 
\end{eqnarray*}
We identify $[R\gmod]$ with $[R\proj]^\vee$ as an $\A$-module using the above pairing $( \ , \ )$.
Then
The dual map of $\gamma$
\begin{align*}
\gamma^\vee : [R \gmod] \buildrel \sim \over \rightarrow \Aqq
\end{align*}
is an $\A$-algebra isomorphism.
\end{thm}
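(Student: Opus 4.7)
The plan is to follow the construction of Khovanov--Lauda and Rouquier: first specify $\gamma$ on the generators of $U_\A^-(\g)$, then verify that it respects the quantum Serre relations, and finally establish injectivity and surjectivity by a combination of module-theoretic and graded-rank arguments. I would begin by defining $\gamma(f_i) := [R(\alpha_i)]$, noting that $R(\alpha_i) \simeq \mathbf{k}[x_1]$ is itself a projective $R(\alpha_i)$-module. Since $[R\proj]$ is an $\A$-algebra under the induction product $\Ind$, this prescription extends to an $\A$-algebra homomorphism from the free $\A$-algebra on $\{f_i\}_{i \in I}$ into $[R\proj]$. The divided power $f_i^{(n)}$ must then be sent to the class $[P(i^n)]$ of the unique indecomposable projective summand of $R(n\alpha_i) e(i^n)$ satisfying $[R(n\alpha_i) e(i^n)] = [n]_i!\,[P(i^n)]$ in the Grothendieck group.

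The technical core of the proof is to verify that the quantum Serre relation
\[
\sum_{k=0}^{1-a_{ij}} (-1)^k \, \gamma(f_i^{(k)}) \conv \gamma(f_j) \conv \gamma(f_i^{(1-a_{ij}-k)}) = 0 \qquad (i \ne j)
\]
holds in $[R\proj]$. I would attack this by computing $\Res$ applied to each induced module in the sum and assembling a Mackey-type filtration; the alternating sum collapses precisely because of the specific form of the defining polynomials $Q_{i,j}(u,v)$. Once this step is complete, $\gamma$ descends to a well-defined $\A$-algebra map $U_\A^-(\g) \to [R\proj]$. Surjectivity then follows since every indecomposable projective $P(\nu)$ arises, up to grading shift, as a summand of some convolution product $\gamma(f_{\nu_1}) \conv \cdots \conv \gamma(f_{\nu_n})$ for $\nu \in I^\beta$. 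For injectivity, I would compare graded ranks weight-by-weight: the Khovanov--Lauda pairing is non-degenerate and corresponds to the standard bilinear form on $U_q^-(\g)$, which forces $\gamma$ to be an isomorphism at each weight $\beta \in \rlQ_+$.

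The dual statement should then follow by a formal adjunction argument. The identification $[R\gmod] \simeq [R\proj]^\vee$ through the Khovanov--Lauda pairing, together with the fact that $\gamma$ is an $\A$-algebra isomorphism, automatically yields an $\A$-module isomorphism $\gamma^\vee : [R\gmod] \to \Aqq$. To see that $\gamma^\vee$ is multiplicative, I would apply Lemma \ref{Lem: Ind-Res}: for $P \in R\proj$ and $M,N \in R\gmod$, the identity $([P], [M \conv N]) = ([\Res(P)], [M] \otimes [N])$ says, after transport along $\gamma$, that $\gamma^\vee$ intertwines the convolution product on $[R\gmod]$ with the product on $\Aqq$ induced from $\cmn$, since $\Res$ categorifies $\cmn$ with the correct twist by $q^{-(\wt(y),\wt(z))}$ built into the $\A$-algebra structure on $U_\A^-(\g)\otimes U_\A^-(\g)$. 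The principal obstacle throughout is the Serre-relation step: matching the combinatorics arising from $\Ind$--$\Res$ computations to the precise quantum Serre coefficients is delicate and was the main technical innovation of \cite{KL09, KL11, R08}; a secondary subtlety is that injectivity cannot be argued purely by counting relations and must be reduced to non-degeneracy of the categorical pairing.
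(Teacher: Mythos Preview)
The paper does not give its own proof of this theorem: it is stated with the citation \cite{KL09, KL11, R08} and no argument is supplied. It is a foundational result quoted from Khovanov--Lauda and Rouquier, and the paper uses it as a black box.

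Your sketch is a reasonable outline of how the proof actually goes in those references: define $\gamma$ on generators, verify the quantum Serre relations via explicit $\Ind$/$\Res$ computations on nilHecke-type modules, deduce surjectivity from the fact that every indecomposable projective is a summand of a convolution of rank-one projectives, and obtain injectivity from non-degeneracy of the pairing. The dual statement for $\gamma^\vee$ does follow formally from Lemma~\ref{Lem: Ind-Res} and Lemma~\ref{Lem: Res} as you indicate. So there is no gap in your plan as a sketch of the original argument; there is simply nothing in the present paper to compare it against, since the authors treat the result as established literature.
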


Note that, for $u \in U_\A^-(\g)$ and $M \in R\gmod$, 
\begin{align} \label{Eq: ()=<>} 
( \gamma(u), [M]) = \langle u, \gamma^\vee[M] \rangle.
\end{align}
 In the sequel, we 
 identify $U^-_\A(\g)$ (respectively, $\Aqq$) with $[R\proj]$ (respectively, $[R \gmod]$) via the isomorphism $\gamma$ (respectively,  $\gamma^\vee$).

\begin{lem}{\rm(\cite[Proposition 3.2]{KL09})}\label{Lem: Res} The functor $\Res$ categorify $\cmn$. That is, 
under the isomorphism $\gamma$, the map $[\Res]$ on $[R \proj]$ corresponds to the map $\cmn$ on $U_\A^-(\g)$.
\end{lem}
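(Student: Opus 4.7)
The plan is to deduce the lemma by duality from the part of Theorem~\ref{Thm: categorification} that is already in hand: namely, that $\gamma^\vee:[R\gmod]\to\Aqq$ is an algebra isomorphism, where the multiplication on $[R\gmod]$ is given by $\conv$. The bridge between $\conv$ and $\Res$ is the Frobenius reciprocity statement of Lemma~\ref{Lem: Ind-Res}, and the bridge between multiplication on $\Aqq$ and $\cmn$ on $U_\A^-(\g)$ is the very definition of the product on $\Aqq$ via $\langle u, f\cdot g\rangle = \langle \cmn(u), f\otimes g\rangle$. Combining the two and using the compatibility \eqref{Eq: ()=<>} of the Khovanov--Lauda pairing with the pairing between $U_q^-(\g)$ and its dual will give the result.

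Concretely, for $u\in U_\A^-(\g)$ and $M,N\in R\gmod$ I would evaluate
\[
\langle u,\, \gamma^\vee[M]\cdot \gamma^\vee[N]\rangle
\]
in two ways. On one hand, since $\gamma^\vee$ is an algebra isomorphism, the product equals $\gamma^\vee[M\conv N]$, so by \eqref{Eq: ()=<>} the scalar becomes $(\gamma(u),[M\conv N])$, which by Lemma~\ref{Lem: Ind-Res} equals $([\Res\,\gamma(u)],[M]\otimes[N])$. On the other hand, by the definition of the product on $\Aqq$ together with \eqref{Eq: ()=<>} applied in each tensor factor,
\[
\langle u,\, \gamma^\vee[M]\cdot \gamma^\vee[N]\rangle
= \langle \cmn(u),\, \gamma^\vee[M]\otimes \gamma^\vee[N]\rangle
= \bigl((\gamma\otimes\gamma)(\cmn(u)),\ [M]\otimes[N]\bigr).
\]
Equating the two expressions gives
\[
\bigl([\Res\,\gamma(u)] - (\gamma\otimes\gamma)(\cmn(u)),\ [M]\otimes[N]\bigr) = 0
\]
for every $M,N\in R\gmod$. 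Because $[R\proj]$ and $[R\gmod]$ are in perfect $\A$-duality under $(\cdot,\cdot)$ (implicit in Theorem~\ref{Thm: categorification}), the induced pairing on the tensor squares is also non-degenerate, and we conclude $[\Res]\circ\gamma=(\gamma\otimes\gamma)\circ\cmn$.

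The main obstacle is grading bookkeeping rather than structure: the multiplication on $U_q^-(\g)\otimes U_q^-(\g)$ carries the twist $q^{-(\wt y,\wt z)}$, so for the identification above to hold on the nose (and not merely up to powers of $q$) one must check that the degree shifts built into $e(\beta,\gamma)$, $\Ind_{\beta,\gamma}$, $\Res_{\beta,\gamma}$, and the Khovanov--Lauda pairing $\dim_q P^{\psi}\otimes_R M$ reproduce precisely this twist. This is ultimately guaranteed by the grading conventions $\deg\,x_k e(\nu)=(\alpha_{\nu_k},\alpha_{\nu_k})$ and $\deg\,\tau_l e(\nu)=-(\alpha_{\nu_l},\alpha_{\nu_{l+1}})$, but verifying it cleanly is where the actual work of the proof lies.
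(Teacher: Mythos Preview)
The paper gives no proof of this lemma; it is a bare citation of \cite[Proposition 3.2]{KL09}. In that reference the statement is proven directly by computing how $\Res$ acts on the generating projectives $P(\nu)=R(\beta)e(\nu)$ via the Mackey filtration and matching the result against the explicit formula for $\cmn$. Your approach is genuinely different: rather than a direct computation you run a duality argument, deducing the compatibility of $[\Res]$ with $\cmn$ from the compatibility of $[\Ind]$ (on $[R\gmod]$) with the multiplication on $\Aqq$, using Lemma~\ref{Lem: Ind-Res} as the bridge.

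Your derivation is formally correct given the order of statements in this paper, but you should be aware of a circularity hazard. The assertion in Theorem~\ref{Thm: categorification} that $\gamma^\vee$ is an \emph{algebra} isomorphism is not independent of Lemma~\ref{Lem: Res}: via Lemma~\ref{Lem: Ind-Res} and the definition of the product on $\Aqq$ through $\cmn$, the two statements are essentially equivalent, and in \cite{KL09} the algebra structure on the $[R\gmod]$ side is established precisely by first proving the restriction/comultiplication correspondence. So while your argument is a clean repackaging once Theorem~\ref{Thm: categorification} is accepted wholesale, it does not give an independent proof of the lemma. The direct approach in \cite{KL09} buys you an honest starting point; your approach buys you brevity once the dual side is already known.

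One further comment: the closing paragraph about grading bookkeeping is a red herring in your setup. If you are taking as input that $\gamma^\vee$ is an algebra isomorphism on the nose, then all the $q$-twists have already been absorbed into that statement; there is nothing left to check. The twist issue is real only if you try to prove Lemma~\ref{Lem: Res} or the multiplicativity of $\gamma^\vee$ from scratch, in which case it is handled in \cite{KL09} by the explicit grading on the shuffle/Mackey pieces.
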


\subsection{Cyclotomic quotients}

For $\Lambda \in \wlP^+$ and  $\beta \in \rlQ_+$, let $I^{\Lambda}_{\beta}$ be the two-sided ideal of $R(\beta)$  generated by the elements
$\{ x^{\langle h_{\nu_{\Ht{\beta}}}, \Lambda \rangle} e(\nu)  \mid \nu \in I^\beta \}$.
The \emph{cyclotomic quiver Hecke algebra} $R^\Lambda(\beta)$ is defined as the quotient algebra 
$$R^\Lambda(\beta):=R(\beta)/I^\lambda_\beta.$$

We define the functors $F_i^{\Lambda}$ and $ E_i^{\Lambda}$ 
by
\begin{align*}
F_i^{\Lambda}M := R^{\Lambda}(\beta+\alpha_i)e(\alpha_i,\beta)\otimes_{R^{\Lambda}(\mu)}M, \quad \text{and} \quad
E_i^{\Lambda}M := e(\alpha_i,\beta-\alpha_i)M,
\end{align*}
for $M\in R^{\Lambda}(\beta)\Mod$. Then the actions of the Chevalley generators are given by the following:
\begin{equation*} 
\begin{aligned}
\xymatrix{
R^\Lambda(\beta)\proj  \ar@<0.3em>[rrrr]^{F_i^{\Lambda}} & & &  &  \ar@<0.3em>[llll]^{q_i^{(1-\langle h_i, \Lambda-\beta \rangle)}E_i^\Lambda}  R^\Lambda(\beta+\alpha_i)\proj   \\R^\Lambda(\beta)\gmod \ar@<0.3em>[rrrr]^{q_i^{(1-\langle h_i, \Lambda-\beta \rangle)}F_i^{\Lambda}} & & &  &  \ar@<0.3em>[llll]^{E_i^\Lambda}  R^\Lambda(\beta+\alpha_i)\gmod 
}
\end{aligned}
\end{equation*}

\begin{thm} {\rm (\cite{KK11})}
There exists an $\A$-module isomorphism 
\begin{eqnarray}
[R^{\Lambda} \proj]  \buildrel \sim \over \longrightarrow V_\A(\Lambda) .
\end{eqnarray} 
Taking the dual,
we have an $\A$-module isomorphism 
$$[R^{\Lambda} \gmod] \buildrel \sim \over \longrightarrow V_\A(\Lambda)^\vee.$$
Under these isomorphisms, we have 
$$\prj_\Lambda =[R^{\Lambda} \otimes_R -] \quad \text{and} \quad \inj_\Lambda = [\mathrm{infl}_\Lambda]$$
where 
$R^{\Lambda} \otimes_R - := \bigoplus_{\beta \in \rlQ_+} (R^\Lambda(\beta) \otimes_{R(\beta)} -) : R \proj \to R^\Lambda \proj $, and
$\mathrm{infl}_\Lambda : R^\Lambda \gmod \to R \gmod $ is the fully faithful functor induced by the surjective ring homomorphism $R(\beta) \twoheadrightarrow R^\Lambda(\beta)$.
\end{thm}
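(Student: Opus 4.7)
The strategy is the one introduced by Kang--Kashiwara in \cite{KK11}: endow $[R^\Lambda\proj]$ with a $U_\A(\g)$-module structure via the functors $E_i^\Lambda, F_i^\Lambda$, identify it with $V_\A(\Lambda)$ through its universal property, and then dualize. First I would verify that $(F_i^\Lambda, q_i^{1-\langle h_i, \Lambda-\beta\rangle} E_i^\Lambda)$ is an adjoint pair on $R^\Lambda\proj$; this follows from the usual induction--restriction adjunction for $R(\beta)$ together with the observation that the twist by $q_i^{1-\langle h_i, \Lambda-\beta\rangle}$ absorbs the defect created by the cyclotomic ideal.

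The central (and hardest) step is to establish, on the level of the Grothendieck group, the commutation relation corresponding to $[e_i,f_i]=(t_i-t_i^{-1})/(q_i-q_i^{-1})$. Concretely, one must construct short exact sequences of the form
\[
0 \longrightarrow q_i^{\,2}\, F_i^\Lambda E_i^\Lambda M \longrightarrow E_i^\Lambda F_i^\Lambda M \longrightarrow \bigoplus_{k=0}^{\langle h_i,\Lambda-\beta\rangle -1} q_i^{2k}\, M \longrightarrow 0
\]
when $\langle h_i,\Lambda-\beta\rangle\ge 0$, with the symmetric version in the opposite case. The maps are built from the intertwiners $\tau_k$ and multiplication by $x_k$, and the main work lies in computing kernels and cokernels using the nilpotency relation $x_n^{\langle h_{\nu_n},\Lambda\rangle}e(\nu)=0$ imposed by the cyclotomic ideal. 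The Serre relations, in turn, descend directly from the corresponding relations on $[R\proj]$ provided by Theorem \ref{Thm: categorification} by applying $R^\Lambda\otimes_R -$.

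Once $[R^\Lambda\proj]$ is a $U_\A(\g)$-module in $\Oint$, the class $[R^\Lambda(0)] = [\mathbf k]$ is annihilated by every $E_i^\Lambda$ and has weight $\Lambda$, so by the universal property of $V_\A(\Lambda)$ there is a surjective $U_\A(\g)$-module homomorphism $V_\A(\Lambda) \twoheadrightarrow [R^\Lambda\proj]$ sending $u_\Lambda$ to $[R^\Lambda(0)]$. To see it is an isomorphism, I would factor the natural projection $\prj_\Lambda : U_\A^-(\g)\twoheadrightarrow V_\A(\Lambda)$ as
\[
U_\A^-(\g)\;\simeq\;[R\proj]\;\xrightarrow{\;[R^\Lambda\otimes_R -]\;}\;[R^\Lambda\proj],
\]
using Theorem \ref{Thm: categorification}. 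Both $\prj_\Lambda$ and $[R^\Lambda\otimes_R -]$ are surjective, and a weight-by-weight count (comparing $\dim_{\Q(q)}V_q(\Lambda)_\mu$ with the rank of $[R^\Lambda(\Lambda-\mu)\proj]$, see \cite{KK11}) forces their kernels to coincide; this yields the first isomorphism together with the identification $\prj_\Lambda = [R^\Lambda\otimes_R -]$.

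For the dual statement, note that $R^\Lambda\otimes_R -$ and $\mathrm{infl}_\Lambda$ are an adjoint pair, which makes the Khovanov--Lauda pairing compatible: for $P\in R\proj$ and $M\in R^\Lambda\gmod$,
\[
\bigl([R^\Lambda\otimes_R P],[M]\bigr) = \bigl([P],[\mathrm{infl}_\Lambda M]\bigr).
\]
Transporting this compatibility through the isomorphisms $\gamma,\gamma^\vee$ of Theorem \ref{Thm: categorification} and using \eqref{Eq: ()=<>} shows that the dual of $[R^\Lambda\otimes_R -]$ is $[\mathrm{infl}_\Lambda]$, so the dual of the first isomorphism gives $[R^\Lambda\gmod]\simeq V_\A(\Lambda)^\vee$ together with $\inj_\Lambda = [\mathrm{infl}_\Lambda]$. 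The main obstacle throughout is the commutation relation in the second paragraph: everything else is a formal consequence of categorification of $U_q^-(\g)$ and of the universal property of $V_\A(\Lambda)$.
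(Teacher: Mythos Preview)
The paper does not prove this theorem; it is quoted from \cite{KK11} and stated without proof. Your sketch is a reasonable outline of the Kang--Kashiwara argument, and the identification of the hard step (the categorical $[e_i,f_j]$ relation via explicit short exact sequences built from the intertwiners and the cyclotomic nilpotency) is accurate.

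One point to tighten: the injectivity of $V_\A(\Lambda)\to [R^\Lambda\proj]$ should not be justified by a ``weight-by-weight count'' of ranks, since the rank of $[R^\Lambda(\beta)\proj]$ is precisely what is being established. The non-circular argument is that once $[R^\Lambda\proj]\otimes_\A\Q(q)$ is known to be integrable and your map is a nonzero surjection from the irreducible module $V_q(\Lambda)$, injectivity over $\Q(q)$ is automatic; the $\A$-statement then follows because both sides are free $\A$-modules of finite rank in each weight. Also, your claimed surjectivity of the map out of $V_\A(\Lambda)$ does not come from the universal property alone: it uses that $[R^\Lambda\otimes_R -]$ is surjective on Grothendieck groups, which in turn rests on every projective $R^\Lambda$-module being a summand of one induced from $R$.
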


 In the sequel, we omit the map $\inj_\Lambda$ on $V_\A(\Lambda)$ (respectively, on  $B(\Lambda)$) and the functor $\mathrm{infl}_\Lambda$ if no confusion arises.
Note that there exists a pairing
\begin{eqnarray*} 
[R^\Lambda \proj ] \times [R^\Lambda \gmod] &\to& \A \\
(P,M) &\mapsto& \dim_q P^\psi \otimes _{R^\Lambda} M,
\end{eqnarray*}
by which $[R^\Lambda \proj]$ and $[R^{\Lambda}\gmod]$ are dual to each other.

\begin{prop} \cite[Proposition 2.21]{APS16} \label{Prop: conv}
Let $M \in R^\lambda\gmod $ and $N \in R^{\mu}\gmod$. Then the convolution $M \conv N$ is an $R^{\lambda + \mu}$-module.
\end{prop}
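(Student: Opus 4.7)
To prove the proposition, I plan to show that the cyclotomic ideal $I:=I^{\lambda+\mu}_{\beta+\gamma}$ of $R(\beta+\gamma)$ annihilates the convolution
\[
M\conv N \;=\; R(\beta+\gamma)\,e(\beta,\gamma)\otimes_{R(\beta)\otimes R(\gamma)}(M\otimes N).
\]
Writing $n=\Ht(\beta)$, $m=\Ht(\gamma)$ and $K=n+m$, the ideal $I$ is generated as a two-sided ideal by $g_\sigma:=x_K^{k_\sigma}e(\sigma)$ for $\sigma\in I^{\beta+\gamma}$, where $k_\sigma:=\langle h_{\sigma_K},\lambda+\mu\rangle$. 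It will suffice to verify that each generator $g_\sigma$ acts by zero on every element of $M\conv N$.

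First, I would perform a core computation using the standard embedding $R(\beta)\otimes R(\gamma)\hookrightarrow R(\beta+\gamma)$ realized through $e(\beta,\gamma)$. Under this embedding one has $x_K\cdot e(\beta,\gamma)=e(\beta,\gamma)\cdot(1\otimes x_m)$; hence, on a ``simple'' generator $e(\beta,\gamma)\otimes(v\otimes w)$ with $v\in e(\nu')M$, $w\in e(\nu'')N$, a direct computation gives
\[
g_\sigma\cdot\bigl(e(\beta,\gamma)\otimes(v\otimes w)\bigr) \;=\; \delta_{\sigma,\nu'*\nu''}\,e(\beta,\gamma)\otimes\bigl(v\otimes x_m^{k_\sigma}w\bigr).
\]
Since $\lambda\in\wlP^+$ forces $k_\sigma=\langle h_{\nu''_m},\lambda+\mu\rangle\ge\langle h_{\nu''_m},\mu\rangle$, the cyclotomic relation on $N\in R^\mu(\gamma)\gmod$ yields $x_m^{k_\sigma}w=0$. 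So $g_\sigma$ kills every simple generator of $M\conv N$.

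Next, to promote this vanishing to an arbitrary $z\in M\conv N$, I would invoke the PBW-type freeness of $R(\beta+\gamma)\,e(\beta,\gamma)$ as a right $R(\beta)\otimes R(\gamma)$-module, with basis $\{\tau_w\cdot e(\beta,\gamma)\}$ indexed by minimal coset representatives $w\in\mathfrak{S}_{n+m}/(\mathfrak{S}_n\times\mathfrak{S}_m)$. Every $z$ then has a unique expansion $z=\sum_w\tau_w\otimes u_w$ with $u_w\in M\otimes N$. I would then commute $x_K^{k_\sigma}$ past each $\tau_w$ using the quiver Hecke relations $\tau_k x_l - x_{s_k(l)}\tau_k\in\{0,\pm e(\nu)\}$ and induct on $\ell(w)$, reducing $g_\sigma\cdot(\tau_w\otimes u_w)$ to a sum of actions on simple generators (handled above) plus correction terms that either remain in the ideal $I$ (with strictly shorter $\tau$-word or reduced $x_K$-power) or, after straightening, involve $x_n$ acting at the leftmost position of $R(\gamma)$, where the cyclotomic relation on $M\in R^\lambda(\beta)\gmod$ applies symmetrically (using $\mu\in\wlP^+$ to lift the exponent).

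The hard part will be this combinatorial straightening: moving $x_K^{k_\sigma}$ through a $\tau$-word spawns many correction terms, and one must carefully check that each is ultimately annihilated by one of the two cyclotomic relations. Conceptually, however, the picture is transparent---the rightmost-letter cyclotomic kill on $M\conv N$ is inherited directly from that of $N$, with the dominant-integral assumption $\lambda\in\wlP^+$ ensuring the exponent correctly lifts from $\mu$ to $\lambda+\mu$.
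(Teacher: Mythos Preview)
The paper does not give its own proof of this proposition; it merely cites \cite[Proposition~2.21]{APS16}. So there is no in-paper argument to compare against.

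Your Step~1 is correct and is the essential point: on the generators $e(\beta,\gamma)\otimes(v\otimes w)$ the element $g_\sigma$ acts through $x_m^{k_\sigma}$ on $N$, and dominance of $\lambda$ lets you lift the exponent from $\mu$ to $\lambda+\mu$.

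Step~2, however, has a real gap in the form you describe. When you push $x_K^{k_\sigma}$ past a single $\tau_{K-1}$ (with matching colors), the correction is $\sum_{a+b=k_\sigma-1}x_{K-1}^a x_K^b\,e(\nu)$. These terms involve strictly \emph{smaller} powers of $x_K$, so they do not lie in $I^{\lambda+\mu}$, and they do not yet carry a high power of $x_n$ acting on $M$. Thus your stated dichotomy for the correction terms (``remain in $I$'' or ``involve $x_n$ where the cyclotomic relation on $M$ applies'') is not justified, and an unstructured induction on $\ell(w)$ with decreasing $x_K$-power will not close.

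A workable organization is the following. For a minimal $(\mathfrak S_n\times\mathfrak S_m)$-coset representative $w$, one always has $w^{-1}(K)\in\{n,K\}$, since $w$ is increasing on each block. If $w^{-1}(K)=K$, then $w\in\mathfrak S_{K-1}$ and admits a reduced expression avoiding $s_{K-1}$; hence $x_K$ literally commutes with $\tau_w$, and your Step~1 computation applies verbatim with no correction terms. If $w^{-1}(K)=n$, factor $\tau_w$ so as to isolate the subword that transports position $K$ down to position $n$; pushing $x_K^{k_\sigma}$ through that subword yields a main term $\tau_w x_n^{k_\sigma}$ (killed by the cyclotomic relation on $M$, since $k_\sigma\ge\langle h_{\sigma_K},\lambda\rangle$) together with correction terms supported on coset representatives of \emph{strictly smaller length}. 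This is what makes an induction on $\ell(w)$ go through; verifying that the corrections really re-expand over shorter coset representatives (and with the correct idempotents) is the combinatorial bookkeeping you are glossing over.
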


\vskip 2em 

\section{Tensor products and convolution products} \label{Sec: tensor and convolution}

\subsection{Tensor products}  \

The following lemma is known, but   
we  include a proof for the convenience of the reader.
\begin{lem} [\protect{cf.  \cite[Lemma 2.5]{Kimura12}}]  \label{Lem: comulti}
For $x \in U_q^-(\g)$ with $\cmn(x) = x_{(1)} \otimes x_{(2)}$, we have 
\[
\cmm(x) = x_{(1)} q^{- \xi(\wt(x_{(2)}))} \otimes x_{(2)},
\]
 where 
 $\xi : \h^* \to \h$ is the  $\Q$-linear isomorphism determined by 
$$\lan \xi(\lambda),\mu \ran =(\lambda,\mu) \quad \text{for all } \ \lambda,\mu \in \mathfrak h^*.$$
\end{lem}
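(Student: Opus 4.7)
The plan is to recognize both sides of the claimed identity as algebra homomorphisms $U_q^-(\g) \to U_q(\g) \otimes_{\Q(q)} U_q(\g)$ (with respect to the \emph{ordinary} multiplication on the target) and to check that they agree on the generators $f_i$. Define
\[
\Phi(x) := x_{(1)}\, q^{-\xi(\wt(x_{(2)}))} \otimes x_{(2)}, \qquad x \in U_q^-(\g),
\]
where $\cmn(x) = x_{(1)} \otimes x_{(2)}$ in Sweedler notation. The claim is that $\Phi = \cmm|_{U_q^-(\g)}$.

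\emph{Step 1 (base case on the generators).} For $x = f_i$, we have $\cmn(f_i) = 1 \otimes f_i + f_i \otimes 1$. From \eqref{eq:bilinear} one checks $\langle \xi(\alpha_i), \mu\rangle = (\alpha_i,\mu) = \mathsf d_i \langle h_i, \mu\rangle$ for every $\mu \in \mathfrak h^*$, so $\xi(\alpha_i) = \mathsf d_i h_i$ and hence $q^{\xi(\alpha_i)} = t_i$. Since $\wt(f_i) = -\alpha_i$ and the second tensor factor in the first summand has weight $-\alpha_i$ while in the second has weight $0$,
\[
\Phi(f_i) = q^{\xi(\alpha_i)} \otimes f_i + f_i \otimes 1 = t_i \otimes f_i + f_i \otimes 1 = \cmm(f_i).
\]

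\emph{Step 2 (multiplicativity of $\Phi$).} Recall that $\cmn$ is an algebra homomorphism with respect to the twisted multiplication, i.e.\
\[
\cmn(xy) = q^{-(\wt(x_{(2)}), \wt(y_{(1)}))}\, x_{(1)} y_{(1)} \otimes x_{(2)} y_{(2)}.
\]
Therefore
\[
\Phi(xy) = q^{-(\wt(x_{(2)}), \wt(y_{(1)}))}\, x_{(1)} y_{(1)}\, q^{-\xi(\wt(x_{(2)}) + \wt(y_{(2)}))} \otimes x_{(2)} y_{(2)}.
\]
On the other hand, using the commutation relation $q^h z = q^{\langle h, \wt(z)\rangle} z\, q^h$ for any weight vector $z$,
\[
q^{-\xi(\wt(x_{(2)}))}\, y_{(1)} \;=\; q^{-\langle \xi(\wt(x_{(2)})),\, \wt(y_{(1)})\rangle}\, y_{(1)}\, q^{-\xi(\wt(x_{(2)}))} \;=\; q^{-(\wt(x_{(2)}),\, \wt(y_{(1)}))}\, y_{(1)}\, q^{-\xi(\wt(x_{(2)}))},
\]
so that
\[
\Phi(x)\Phi(y) = \bigl(x_{(1)} q^{-\xi(\wt(x_{(2)}))} \otimes x_{(2)}\bigr)\bigl(y_{(1)} q^{-\xi(\wt(y_{(2)}))} \otimes y_{(2)}\bigr) = \Phi(xy).
\]
Thus $\Phi$ is an algebra map with respect to the ordinary product on $U_q(\g) \otimes U_q(\g)$, as is $\cmm$.

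\emph{Step 3 (conclusion).} Since $U_q^-(\g)$ is generated by $\{f_i\}_{i\in I}$ and the two algebra maps $\Phi$ and $\cmm|_{U_q^-(\g)}$ agree on these generators by Step 1, they coincide on all of $U_q^-(\g)$.

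The only mildly delicate point is the commutation computation in Step 2: the factor $q^{-(\wt(x_{(2)}),\wt(y_{(1)}))}$ produced when sliding $q^{-\xi(\wt(x_{(2)}))}$ past $y_{(1)}$ must exactly cancel the twist arising from the twisted algebra structure defining $\cmn$. This matching is precisely what forces the element $\xi(\wt(x_{(2)}))$ (rather than, say, $\xi(\wt(x))$) to appear in the formula.
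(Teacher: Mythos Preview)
Your proof is correct and is essentially the same as the paper's, just organized more cleanly: the paper proceeds by induction on $\Ht(x)$, verifying the identity for $f_i x$ from the identity for $x$, which is exactly your multiplicativity computation in Step~2 specialized to $y=f_i$ (together with your Step~1 base case). Recasting the induction as ``two algebra homomorphisms agreeing on generators'' is a standard and pleasant repackaging, but not a genuinely different argument.
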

\begin{proof}
We shall use induction on $\Ht(x)$. 
As it is trivial when $\Ht(x)=0$, we assume that the assertion holds for $x$ with $\Ht(x)>0$.

Let $i\in I$. By the definition of $\cmn$, we have  
\begin{align*}
\cmn(f_ix) = \cmn(f_i)\cmn(x) = f_ix_{(1)} \otimes x_{(2)} + q^{(\alpha_i, \wt(x_{(1)}))}   x_{(1)} \otimes  f_i x_{(2)}.
\end{align*}
On the other hand, by the induction hypothesis,
\begin{align*}
\cmm(f_i x) &=  (f_i \otimes 1 + t_i \otimes f_i) (x_{(1)}q^{-\xi(\wt(x_{(2)}  )  )} \otimes x_{(2)} ) \\
&= f_i x_{(1)} q^{-\xi( \wt(x_{(2)}) ) } \otimes x_{(2)} +   t_i x_{(1)} q^{-\xi( \wt(x_{(2)}) )} \otimes f_i x_{(2)} \\
 &= f_i x_{(1)} q^{-\xi( \wt(x_{(2)}) ) } \otimes x_{(2)} +   q^{(\alpha_i, \wt(x_{(1)}) ) }  x_{(1)} t_i q^{-\xi( \wt(x_{(2)}) )} \otimes f_i x_{(2)} \\
 &= f_i x_{(1)} q^{-\xi( \wt(x_{(2)}) ) } \otimes x_{(2)} +   q^{(\alpha_i, \wt(x_{(1)}) ) } x_{(1)}  q^{-\xi(  -\alpha_i + \wt(x_{(2)}) )} \otimes f_i x_{(2)}.
\end{align*}
Therefore, the assertion holds for $f_i x$ which complete the proof.
\end{proof}

\medskip
Let $\lambda \in \wlP^+$. 
We now consider the $\Q(q)$-linear map 
\begin{align*}
\fc: U_q^-(\g) \rightarrowtail U_q^-(\g) \otimes_{\Q(q)} U_q^-(\g) 
\end{align*}
defined by $$\fc(x) = q^{-(\wt(x_{(2)}), \lambda)} x_{(1)} \otimes x_{(2)}, $$ where $\cmn(x) = x_{(1)} \otimes x_{(2)}$.
Note that
\begin{align} \label{Eq: fc}
\fc|_{U_\A^-(\g)} : U_\A^-(\g) \rightarrowtail U_\A^-(\g) \otimes_{\A} U_\A^-(\g). 
\end{align}
We simply write $\fc$ instead of $\fc|_{U_\A^-(\g)}$ if there is no afraid of confusion.

Define
$$(U_q^-(\g) \otimes_{\Q(q)} U_q^-(\g))^\vee_\A 
:=\{ f \in (U_q^-(\g) \otimes_{\Q(q)} U_q^-(\g))^\vee \mid  f(U_\A^-(\g) \otimes_{\A} U_\A^-(\g)) \subset \A \}.
$$
By restricting the $\Q(q)$-linear isomorphism
$$  
\Aq \otimes_{\Q(q)} \Aq \simeq  (U_q^-(\g) \otimes_{\Q(q)} U_q^-(\g))^\vee
$$
which is given by
$
\lan f\otimes g , x\otimes y \ran := f(x)g(x) 
$
 for $ f, g\in \Aq $\ and $ x,y \in U_q^-(\g)$
on the $\A$-lattice 
$\Aqq \otimes_{\A} \Aqq \subset \Aq \otimes_{\Q(q)} \Aq,$
we obtain an $\A$-module  isomorphism 
\begin{equation} \label{eq: eta}
 \Aqq \otimes_{\A} \Aqq \buildrel \sim\over \longrightarrow  (U_q^-(\g) \otimes_{\Q(q)} U_q^-(\g))^\vee_\A.
\end{equation}
Similarly, by restricting \eqref{eq:MtensNdual} on the $\A$-lattice 
$V_\A(\lambda)^\vee \otimes_{\A} V_\A(\mu)^\vee \subset V_q(\lambda)^\vee \otimes_+ V_q(\mu)^\vee$,
we obtain an 
$\A$-module  isomorphism 
\begin{equation} \label{eq: zeta}
  V_\A(\lambda)^\vee \otimes_{\A} V_\A(\mu)^\vee  \buildrel \sim\over \longrightarrow (V_q(\lambda) \otimes_- V_q(\mu))^\vee_\A ,
\end{equation} where 
$(V_q(\lambda) \otimes_- V_q(\mu))^\vee_\A 
:=\{f \in (V_q(\lambda) \otimes_- V_q(\mu))^\vee \mid f(V_\A(\lambda) \otimes_\A V_\A(\mu)) \subset \A \}.$
In the sequel, we will identify the spaces  in \eqref{eq: eta} and \eqref{eq: zeta} respectively,  without mentioning the isomorphisms.

Now, restricting the dual map $ (U_q^-(\g) \otimes_{\Q(q)} U_q^-(\g))^\vee \buildrel (\fc)^\vee \over \twoheadrightarrow U_q^-(\g)^\vee  $ to $(U_q^-(\g) \otimes_{\Q(q)} U_q^-(\g))^\vee_\A \simeq \Aqq \otimes_\A \Aqq$, we have
\begin{align} \label{Eq: dfc}
\dfc :=  (\fc )^\vee 
: \Aqq \otimes_\A \Aqq \twoheadrightarrow \Aqq
\end{align}

Let $\lambda,\mu \in \wlP^+$ .
We define a $U_q(\g)$-module homomorphism 
\begin{align*}
 \iota_{\lambda, \mu} : &  V_q(\lambda + \mu) \rightarrowtail V_q(\lambda ) \otimes_{-}V_q(\mu)
\end{align*}
by $\iota_{\lambda, \mu}(u_{\lambda+ \mu}) = u_\lambda \otimes u_\mu$. By Lemma \ref{Lem: comulti} and $\eqref{Eq: fc}$, 
for $x \in U_\A^-(\g) $, we have 
\begin{align} \label{Eq: compatible with f_i}
x \cdot (u_\lambda \otimes u_\mu) = \cmm(x) (u_\lambda \otimes u_\mu) = \fc(x) (u_\lambda \otimes u_\mu) \in V_\A(\lambda) \otimes_\A V_\A(\mu).
\end{align}
Restricting $\iota_{\lambda, \mu}$ to $V_\A(\lambda + \mu)$, we have 
\begin{align} \label{Eq: iota_A}
 \iota_{\lambda, \mu} |_{ V_\A(\lambda + \mu)} : &  V_\A(\lambda + \mu) \rightarrowtail V_\A(\lambda ) \otimes_{ \A }V_\A(\mu)  \subset V_q(\lambda) \otimes_- V_q(\mu). 
\end{align}
We simply write $ \iota_{\lambda, \mu}$ instead of $ \iota_{\lambda, \mu} |_{ V_\A(\lambda + \mu)}$ if there is no afraid of confusion.
Similarly, restricting the dual map $  (V_q(\lambda ) \otimes_{-}V_q(\mu))^{\vee}  \buildrel (\iota_{\lambda, \mu})^{\vee} \over \twoheadrightarrow  V_q(\lambda + \mu)^\vee  $
to  $ (V_q(\lambda ) \otimes_{-}V_q(\mu))^\vee_\A \simeq V_\A(\lambda )^\vee  \otimes_{\A}V_\A(\mu)^\vee  $,  we have 
\begin{align} \label{Eq: pi_A}
\pi_{\lambda, \mu}:=  \iota_{\lambda, \mu}^\vee 
: V_\A(\lambda )^\vee \otimes_{ \A }V_\A(\mu)^\vee \twoheadrightarrow V_\A(\lambda + \mu )^\vee.
\end{align}

We now consider the following commutative diagram:
\begin{align} \label{Eq: comm}
\xymatrix
{
 U_q^-(\g) \ar@{>->}[rr]^{\fc} \ar@{->>}[d]_{\prj_{\lambda + \mu}} &&  U_q^-(\g) \otimes U_q^-(\g)    \ar@{->>}[rr]^{\prj_\lambda \otimes \prj_\mu } &&  V_q(\lambda) \otimes_{-} V_q(\mu) \\
 V_q(\lambda + \mu) \ar@{>->}[rrrru]_{\iota_{\lambda, \mu}}
}
\end{align} 
Note that, since $\iota_{\lambda,\mu}$ is injective, so is $\fc$. 
Restricting $\eqref{Eq: comm}$ to the $\A$-lattices, by  $\eqref{Eq: proj_A}$, $\eqref{Eq: fc}$ and $\eqref{Eq: iota_A}$, we have the following:
\begin{align} \label{Eq: comm_proj}
\xymatrix
{
 U_\A^-(\g) \ar@{>->}[rr]^{\fc} \ar@{->>}[d]_{\prj_{\lambda + \mu}} &&  U_\A^-(\g) \otimes U_\A^-(\g)    \ar@{->>}[rr]^{\prj_\lambda \otimes \prj_\mu } &&  V_\A(\lambda) \otimes_{ \A } V_\A(\mu) \\
 V_\A(\lambda + \mu) \ar@{>->}[rrrru]_{\iota_{\lambda, \mu}}
}
\end{align} 
Taking the dual of $\eqref{Eq: comm}$ and restricting it to the $\A$-lattices, by $\eqref{Eq: Inj_A}$, $\eqref{Eq: dfc}$ and $\eqref{Eq: pi_A}$, we have the following  commutative diagram:
\begin{align} \label{Eq: comm_inj}
\xymatrix
{
 \Aqq   && \ar@{->>}[ll]_{\dfc}  \Aqq \otimes_{ \A } \Aqq     && \ar@{>->}[ll]_{\inj_\lambda \otimes \inj_\mu }   V_\A(\lambda)^\vee \otimes_{ \A} V_\A(\mu)^\vee \ar@{->>}[lllld]^{\pi_{\lambda, \mu}}. \\
 V_\A(\lambda + \mu)^\vee  \ar@{>->}[u]^{\inj_{\lambda + \mu}}  
}
\end{align}

\subsection{Convolution products} \

For $\lambda \in \wlP^+$  and $M \in R(\beta)\Mod$, we define 
\begin{equation}
\begin{aligned}
\mRes(M) & := \bigoplus_{ \beta_1, \beta_2 \in \rlQ_+,  \  \beta_1 + \beta_2=\beta} q^{(\beta_2, \lambda) } \Res_{\beta_1, \beta_2}(M). 
\end{aligned}
\end{equation}
Thanks to \cite[Proposition 2.19]{KL09}, $\mRes$ takes projective modules to projective modules. Thus we have the functors 
$$
\mRes : R\proj \rightarrow (R\otimes R) \proj,
$$ 
which give the $\A$-linear maps at the level of Grothendieck groups
$$
[\mRes] : [R\proj] \rightarrow [(R\otimes R) \proj] \simeq [R\proj]\otimes[R\proj].
$$

\begin{lem} \label{Lem: Res_lam}
The functors $\mRes$ categorify $\fc$. That is, for any $P \in R \proj$ we have
$$[\mRes P]= \fc ([P]).$$
\end{lem}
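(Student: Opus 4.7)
The plan is to deduce this from Lemma \ref{Lem: Res} (which states that $[\Res]$ categorifies $\cmn$) by a direct comparison of homogeneous components, since $\fc$ differs from $\cmn$ only by the scalar $q^{-(\wt(x_{(2)}), \lambda)}$, and $\mRes$ differs from $\Res$ only by the grading shift $q^{(\beta_2, \lambda)}$ on each summand $\Res_{\beta_1, \beta_2}$. So the whole content of the lemma is a matching of these two weight-dependent twists.

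More concretely, I would fix $P \in R(\beta)\proj$ and decompose both sides according to the partition $\beta = \beta_1 + \beta_2$. Under the isomorphism $\gamma : U_\A^-(\g) \buildrel\sim\over\to [R\proj]$, the component $U_q^-(\g)_{-\beta}$ corresponds to $[R(\beta)\proj]$, so if $\cmn([P]) = \sum [P_{(1)}] \otimes [P_{(2)}]$ with $[P_{(2)}]$ lying in the $(-\beta_2)$-weight space, then $\wt([P_{(2)}]) = -\beta_2$ and hence
\[
\fc([P]) = \sum q^{-(\wt([P_{(2)}]), \lambda)}\, [P_{(1)}] \otimes [P_{(2)}] = \sum_{\beta_1 + \beta_2 = \beta} q^{(\beta_2, \lambda)}\, \bigl(\cmn([P])\bigr)_{\beta_1, \beta_2},
\]
where $(\cdot)_{\beta_1,\beta_2}$ denotes projection onto $[R(\beta_1)\proj] \otimes [R(\beta_2)\proj]$. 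By Lemma \ref{Lem: Res}, $(\cmn([P]))_{\beta_1,\beta_2} = [\Res_{\beta_1,\beta_2}(P)]$, and because the class of $q^{(\beta_2,\lambda)}\Res_{\beta_1,\beta_2}(P)$ in the Grothendieck group equals $q^{(\beta_2,\lambda)}[\Res_{\beta_1,\beta_2}(P)]$, summing over $\beta_1+\beta_2 = \beta$ gives exactly $[\mRes(P)]$.

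Two small bookkeeping points need a line of verification but present no real obstacle. First, one should confirm the weight convention: $\gamma$ intertwines $\wt$ on $U_q^-(\g)$ with the map $[R(\beta)\proj] \mapsto -\beta$, which is built into the categorification theorem (Theorem \ref{Thm: categorification}). Second, one should note that passing from the endofunctor $q^{(\beta_2,\lambda)}$ on modules to multiplication by the formal variable $q^{(\beta_2,\lambda)}$ on Grothendieck groups is the standard definition of the $\A$-module structure on $[R\proj]$ via grading shift. With these conventions in place, the two displayed sums coincide termwise, proving $[\mRes P] = \fc([P])$.
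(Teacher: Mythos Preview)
Your proposal is correct and is essentially the same approach as the paper's proof, which simply says the result follows from Lemma~\ref{Lem: Res} and Lemma~\ref{Lem: comulti}; you have just unpacked the weight-matching that makes this immediate. (The reference to Lemma~\ref{Lem: comulti} in the paper is mainly to explain the origin of the twist in $\fc$; your argument uses only Lemma~\ref{Lem: Res} together with the explicit definition of $\fc$, which is enough.)
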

\begin{proof}
It follows from Lemma \ref{Lem: Res} and Lemma \ref{Lem: comulti}.
\end{proof}

\begin{thm} \label{Thm: main}
For $M \in R(\beta_1)\gmod$ and $N\in R(\beta_2)\gmod$, we have
\[
\dfc( [M] \otimes [N] ) = q^{(\beta_2, \lambda)}[M\conv N].
\]
\end{thm}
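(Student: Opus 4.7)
The plan is to test the claimed equality against arbitrary classes of projective modules using the Khovanov-Lauda pairing, and thereby reduce the identity to the combination of Lemma~\ref{Lem: Res_lam} (which categorifies $\fc$ by $\mRes$) and Lemma~\ref{Lem: Ind-Res} (the $\Ind$-$\Res$ adjunction at the level of the pairing). Since the pairing between $[R\proj]$ and $[R\gmod]$ is nondegenerate, it suffices to show
\[
([P], \dfc([M] \otimes [N])) = q^{(\beta_2, \lambda)}([P], [M \conv N])
\]
for every $P \in R\proj$; by block decomposition we may assume $P \in R(\beta_1 + \beta_2)\proj$, since otherwise both sides pair to zero with $[P]$.

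First I would unfold the left-hand side by using the definition of $\dfc$ as the restriction of $(\fc)^\vee$ to the integral lattice, together with the identifications $[R\proj] \simeq U_\A^-(\g)$ and $[R\gmod] \simeq \Aqq$ from Theorem~\ref{Thm: categorification}. Using \eqref{Eq: ()=<>} and the tensor product pairing from \eqref{eq: eta}, this yields
\[
([P], \dfc([M] \otimes [N])) = (\fc([P]), [M] \otimes [N]),
\]
where on the right the pairing is the tensor product Khovanov-Lauda pairing on $[(R\otimes R)\proj] \times [(R\otimes R)\gmod]$.

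Next I would invoke Lemma~\ref{Lem: Res_lam} to replace $\fc([P])$ by $[\mRes P]$. Unpacking the definition
\[
\mRes(P) = \bigoplus_{\beta_1' + \beta_2' = \beta_1 + \beta_2} q^{(\beta_2', \lambda)} \Res_{\beta_1', \beta_2'}(P),
\]
and noting that $M \otimes N \in R(\beta_1) \otimes R(\beta_2)\gmod$ pairs nontrivially only with the $(\beta_1, \beta_2)$-summand by block orthogonality, the right-hand side reduces to
\[
q^{(\beta_2, \lambda)} \bigl([\Res_{\beta_1, \beta_2} P], [M] \otimes [N]\bigr).
\]
Finally, Lemma~\ref{Lem: Ind-Res} rewrites this inner pairing as $([P], [M \conv N])$, giving the desired identity after nondegeneracy.

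No serious obstacle is anticipated: the argument is essentially formal bookkeeping once the dualities of \eqref{Eq: dfc}, \eqref{eq: eta}, and the $\Ind$-$\Res$ adjunction are threaded together. The only delicate point is verifying that the grading shift $q^{(\beta_2, \lambda)}$ packaged into the definition of $\mRes$ is the correct one to recover the factor appearing on the right-hand side of the theorem, which is itself the raison d'être of Lemma~\ref{Lem: Res_lam} (and ultimately of the twist in Lemma~\ref{Lem: comulti}).
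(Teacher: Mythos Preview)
Your proposal is correct and follows essentially the same route as the paper's own proof: test the identity against an arbitrary $[P]\in[R\proj]$ via the Khovanov--Lauda pairing, pass from $\dfc$ to $\fc$ by duality, replace $\fc([P])$ by $[\mRes P]$ via Lemma~\ref{Lem: Res_lam}, isolate the $(\beta_1,\beta_2)$-summand, and then invoke Lemma~\ref{Lem: Ind-Res}. The only cosmetic difference is that you spell out the block-orthogonality and nondegeneracy steps a bit more explicitly than the paper does.
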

\begin{proof}
By the definition, we have $\langle u, \dfc(m\otimes n)  \rangle =  \langle \fc(u), m\otimes n  \rangle  $
for any $u \in U_\A^-(\g)$ and $m,n \in U_\A^-(\g)^\vee$.
As we identify $U^-_\A(\g) \simeq [R\proj]$ and $[R \gmod] \simeq  \Aqq$ via $\gamma$  and $\gamma^\vee$ given in Theorem \ref{Thm: categorification} respectively, 
it follows from $\eqref{Eq: ()=<>}$ that 
$$
( [P], \dfc([M] \otimes [N]))  =   (\fc([P]), [M] \otimes [N])
$$
for any $P \in R\proj$ and $M,N \in R\gmod$.
By Lemma \ref{Lem: Ind-Res} and Lemma \ref{Lem: Res_lam}, for any $P \in R\proj$, $M \in R(\beta_1)\gmod$ and $N \in R(\beta_2)\gmod$, we have 
\begin{align*}
( [P], \dfc([M] \otimes [N])) & =   (\fc([P]), [M] \otimes [N]) = ([\mRes(P)] , [M] \otimes [N]) \\ 
&= ( q^{(\beta_2, \lambda)} [\Res_{\beta_1, \beta_2}(P)] , [M] \otimes [N]) \\ 
&= (  [P] , q^{(\beta_2, \lambda)} [M \conv N]), 
\end{align*} 
which implies the assertion. 
\end{proof}

Then we have the following corollary.
\begin{cor} \label{Cor: main}
For $M \in R^\lambda(\beta_1)\gmod$ and $N\in R^\mu(\beta_2)\gmod$, we have
\[
 \pi_{\lambda, \mu} ( [M] \otimes [N] ) = q^{(\beta_2, \lambda)}[M\conv N] 
\] 
\end{cor}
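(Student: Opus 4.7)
The plan is to deduce this corollary directly from Theorem \ref{Thm: main} by running the computation inside the commutative diagram \eqref{Eq: comm_inj} and using the compatibility of convolution with the cyclotomic quotients given by Proposition \ref{Prop: conv}.

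First, I would identify $V_\A(\lambda)^\vee$ with $[R^\lambda\gmod]$ and similarly for $\mu$ and $\lambda+\mu$, and recall that under these identifications $\inj_\lambda$ corresponds to the inflation functor $\mathrm{infl}_\lambda : R^\lambda\gmod \to R\gmod$. Thus for $M \in R^\lambda(\beta_1)\gmod$ and $N \in R^\mu(\beta_2)\gmod$, the element $(\inj_\lambda \otimes \inj_\mu)([M] \otimes [N])$ is simply $[M] \otimes [N]$ regarded inside $\Aqq \otimes_\A \Aqq \simeq [R\gmod]\otimes[R\gmod]$ via inflation.

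Next, applying Theorem \ref{Thm: main} to these inflated modules gives
\[
\dfc\bigl((\inj_\lambda \otimes \inj_\mu)([M]\otimes[N])\bigr) = q^{(\beta_2,\lambda)}\,[M \conv N],
\]
where on the right hand side the convolution is computed in $R\gmod$. By Proposition \ref{Prop: conv}, the $R$-module $M \conv N$ carries in fact the structure of an $R^{\lambda+\mu}$-module, so $[M \conv N]$ on the right is nothing but $\inj_{\lambda+\mu}$ applied to the class of $M \conv N$ in $[R^{\lambda+\mu}\gmod] \simeq V_\A(\lambda+\mu)^\vee$.

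Finally, chasing the commutative diagram \eqref{Eq: comm_inj} gives
\[
\inj_{\lambda+\mu}\bigl(\pi_{\lambda,\mu}([M]\otimes[N])\bigr) = \dfc\bigl((\inj_\lambda \otimes \inj_\mu)([M]\otimes[N])\bigr) = q^{(\beta_2,\lambda)}\,\inj_{\lambda+\mu}[M\conv N],
\]
and since $\inj_{\lambda+\mu}$ is injective we may cancel it to conclude $\pi_{\lambda,\mu}([M]\otimes[N]) = q^{(\beta_2,\lambda)}[M\conv N]$. There is no real obstacle here; the only point that requires a second glance is checking that inflation intertwines the convolution on $R\gmod$ with the cyclotomic convolution on $R^{\lambda+\mu}\gmod$, which is precisely the content of Proposition \ref{Prop: conv}.
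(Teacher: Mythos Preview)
Your argument is correct and follows precisely the paper's own approach: the paper's proof consists of the single sentence ``The assertion follows from \eqref{Eq: comm_inj}, Proposition \ref{Prop: conv} and Theorem \ref{Thm: main},'' and you have simply unpacked that sentence by chasing the diagram, invoking Theorem \ref{Thm: main} on the inflated classes, and using the injectivity of $\inj_{\lambda+\mu}$.
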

\begin{proof}
The assertion follows 
from $\eqref{Eq: comm_inj}$, Proposition \ref{Prop: conv} and Theorem \ref{Thm: main}. 
\end{proof}

\subsection{Symmetric case}
From now on, we assume that the base field $\bR$ is of characteristic 0 and $R$ is \emph{symmetric}; that is,
the parameter
$$ \text{$Q_{i,j}(u,v)$ is a polynomial in $u-v$ for any
$i,j \in I$.}$$ 
Then it was proved in \cite{R11, VV11} that the set of self-dual simple $R$-modules in $R\gmod$ corresponds the upper global basis of $\Aqq$ under the isomorphsim $\gamma^\vee$. 
Combining this with the connection between tensor products and convolution products,
we have interesting theorems below. 
For $b \in B(\infty)$, we denote by $L(b)$ the self-dual simple R-module corresponding to $G^\up(b)$. 
For $b \in B(\lambda)$, we simply use the same notation $L(b)$ for the self-dual simple $R^\lambda$-module corresponding to $G^\up_\lambda(b)$ if there is no afraid of confusion.

\begin{thm} \label{Thm: positivity}
Let $M \in R^\lambda(\beta_1)\gmod$ and $N\in R^\mu(\beta_2)\gmod$. 
Suppose that  
$$
  \pi_{\lambda, \mu} ( [M] \otimes [N] ) = \sum_{b \in B(\lambda+\mu)} A_b(q)    [L(b)]  
$$ 
for $A_b(q) \in  \Q(q) $. Then, for $b\in B(\lambda+\mu)$, we have 
\begin{enumerate}
\item $A_b(q) \in \Z_{\ge0}[q,q^{-1}]$,
\item $[M\conv N : L(b)]_q = q^{-(\beta_2, \lambda)} A_b(q).$
\end{enumerate}  
\end{thm}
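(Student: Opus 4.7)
The plan is to deduce this theorem directly from Corollary \ref{Cor: main}, combined with the Rouquier--Varagnolo--Vasserot theorem on the categorification of the upper global basis by self-dual simple modules. By Corollary \ref{Cor: main} the hypothesis of the theorem rewrites as
$$q^{(\beta_2, \lambda)}[M \conv N] = \sum_{b \in B(\lambda+\mu)} A_b(q)\, [L(b)].$$
On the other hand, Proposition \ref{Prop: conv} guarantees that $M \conv N$ lies in $R^{\lambda+\mu}(\beta_1+\beta_2)\gmod$, so its class in $[R^{\lambda+\mu}\gmod] \simeq V_\A(\lambda+\mu)^\vee$ admits the standard decomposition
$$[M \conv N] = \sum_{b \in B(\lambda+\mu)} [M \conv N : L(b)]_q\, [L(b)],$$
in which each graded composition multiplicity $[M \conv N : L(b)]_q$ belongs to $\Z_{\geq 0}[q, q^{-1}]$ by the very definition of a graded composition series.

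The key step is to invoke the hypothesis that $R$ is symmetric and $\bR$ has characteristic $0$: by \cite{R11,VV11}, the set $\{[L(b)] : b \in B(\lambda+\mu)\}$ is an $\A$-basis of $[R^{\lambda+\mu}\gmod]$ corresponding to $\mathbf{B}^{\up}(\lambda+\mu)$ under $\gamma^\vee$. Uniqueness of expansion in this basis then forces
$$A_b(q) = q^{(\beta_2, \lambda)}\, [M \conv N : L(b)]_q,$$
which is precisely assertion (2). Assertion (1) follows immediately, since multiplication by the monomial $q^{(\beta_2, \lambda)}$ preserves $\Z_{\geq 0}[q, q^{-1}]$.

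There is no real obstacle here: once Corollary \ref{Cor: main} and the categorical realization of the upper global basis are in hand, the theorem is purely a matching of two expansions in a common basis. The a priori rationality of $A_b(q) \in \Q(q)$ in the statement is a bit of a red herring; the true content of the theorem is that these coefficients are automatically forced into $\Z_{\geq 0}[q,q^{-1}]$ by their categorical origin as graded composition multiplicities, twisted by the scalar $q^{(\beta_2,\lambda)}$.
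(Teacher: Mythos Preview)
Your proof is correct and follows essentially the same approach as the paper's: the paper's proof is a single sentence invoking Corollary \ref{Cor: main} together with the correspondence between self-dual simple modules and the upper global basis, and you have simply spelled out the details of that deduction. The explicit use of Proposition \ref{Prop: conv} and the basis-comparison argument are precisely what is implicit in the paper's terse justification.
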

\begin{proof}
As the set of simple $R$-modules corresponds the upper global basis of $\Aqq$, the assertion follows from Corollary \ref{Cor: main}.  
\end{proof}

We have the following corollary  immediately. 

\begin{cor} \label{Cor: positivity}
For $b \in B(\lambda)$ and $b' \in B(\mu)$,  we write   
$$
  \pi_{\lambda, \mu} ( G^{\up}_\lambda(b) \otimes G^{\up}_\mu(b') ) = \sum_{b'' \in B(\lambda+\mu)} A_{b''}(q)   G_{ \lambda+\mu  }^{\up}(b'') 
$$ 
for $A_{b''}(q) \in  \Q(q) $. Then, we prove that 
\begin{enumerate}
\item $A_{b''}(q) \in \Z_{\ge0}[q,q^{-1}]$,
\item $[L(b)\conv L(b') : L(b'')]_q = q^{-(\beta_2, \lambda)} A_{b''}(q)$ for $b''\in B(\lambda+\mu)$. 
\end{enumerate}  
\end{cor}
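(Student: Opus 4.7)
The plan is to derive Corollary \ref{Cor: positivity} as a direct specialization of Theorem \ref{Thm: positivity}. The corollary is essentially obtained by plugging in the self-dual simple modules $M = L(b)$ and $N = L(b')$ corresponding to the chosen upper global basis elements, and then quoting the Varagnolo--Vasserot--Rouquier categorification of the upper global basis.

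First I would invoke the symmetric/characteristic zero hypothesis: by the main theorem of \cite{R11, VV11}, applied to the cyclotomic quotients $R^\lambda$ and $R^\mu$ (using the categorification $[R^\lambda \gmod] \simeq V_\A(\lambda)^\vee$ and $[R^\mu \gmod] \simeq V_\A(\mu)^\vee$ together with the conventions fixed right before the statement), the classes of self-dual simple modules coincide with the upper global basis. Concretely, $[L(b)] = G^{\up}_\lambda(b)$ in $V_\A(\lambda)^\vee$ and $[L(b')] = G^{\up}_\mu(b')$ in $V_\A(\mu)^\vee$, where $L(b) \in R^\lambda(\beta_1)\gmod$ and $L(b') \in R^\mu(\beta_2)\gmod$ with $\beta_1 = \lambda - \wt(b)$ and $\beta_2 = \mu - \wt(b')$ (so that the weights match).

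Next I would substitute into Theorem \ref{Thm: positivity}. Writing
\[
\pi_{\lambda, \mu}(G^{\up}_\lambda(b) \otimes G^{\up}_\mu(b')) = \pi_{\lambda,\mu}([L(b)] \otimes [L(b')]) = \sum_{b'' \in B(\lambda+\mu)} A_{b''}(q)\,[L(b'')],
\]
Theorem \ref{Thm: positivity}(1) immediately yields $A_{b''}(q) \in \Z_{\ge 0}[q,q^{-1}]$, and Theorem \ref{Thm: positivity}(2) gives
\[
[L(b)\conv L(b') : L(b'')]_q = q^{-(\beta_2, \lambda)} A_{b''}(q).
\]
Using the identification $[L(b'')] = G^{\up}_{\lambda+\mu}(b'')$ once more to match the statement of the corollary then finishes both claims simultaneously.

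There is no real obstacle here; the content of the corollary is exactly Theorem \ref{Thm: positivity} once one translates between the two languages (self-dual simple modules on one side, upper global basis elements on the other). The only point that deserves a sentence of justification is the identification $[L(b)] = G^{\up}_\lambda(b)$ and $[L(b'')] = G^{\up}_{\lambda+\mu}(b'')$, which uses the symmetry of $R$, the characteristic zero hypothesis on $\bR$, and the compatibility of the global basis of $U_q^-(\g)$ with that of $V_q(\lambda)$ recalled before Proposition \ref{Prop: global}.
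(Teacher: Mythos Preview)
Your proposal is correct and matches the paper's approach exactly: the paper simply states ``We have the following corollary immediately'' after Theorem~\ref{Thm: positivity}, and your argument spells out precisely this specialization $M=L(b)$, $N=L(b')$ together with the identification $[L(b)]=G^{\up}_\lambda(b)$ coming from \cite{R11,VV11}.
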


Let us recall that $C_{\lambda_{1}, \ldots, \lambda_r }$ denotes the connected component of the crystal $B(\lambda_1)\otimes \cdots \otimes B(\lambda_r)$ 
containing the highest weight vector $u_{\lambda_1}\otimes \cdots \otimes u_{\lambda_r}$ for $\lambda_1, \ldots, \lambda_r \in \wlP^+$.

\begin{thm} \label{Thm: app_tensor}
Let $\lambda_k \in \wlP^+$ and $b_k \in B(\lambda_k)$ for $k=1, \ldots, r$.
We set $\beta_k := \lambda_k - \wt(b_k)$.
If 
\begin{enumerate}
\item[(i)] $b_1 \otimes \cdots \otimes b_r \in C_{\lambda_{1}, \ldots, \lambda_r }$, 
\item[(ii)] $\hd( L(b_1) \conv \cdots \conv L(b_r) )$ is simple,
\end{enumerate}
then we have 
\[
 q^t   \, \hd( L(b_1) \conv \cdots \conv L(b_r) ) \simeq L( b_1 \otimes \cdots \otimes b_r )
\]
 where $t={\displaystyle\sum_{1 \le i < j \le r} (\beta_j,\lambda_i)}$.
\end{thm}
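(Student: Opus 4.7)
Let me set $M := L(b_1) \conv \cdots \conv L(b_r)$ and $\lambda := \sum_k \lambda_k$. My plan is to compute the class $q^t [M]$ in the upper global basis of $[R^\lambda \gmod] \simeq V_\A(\lambda)^\vee$ by combining an iterated application of Corollary~\ref{Cor: main} with Proposition~\ref{Prop: global}, and then to extract $\hd(M)$ using positivity (Corollary~\ref{Cor: positivity}) together with hypothesis~(ii).

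First, I would iterate Corollary~\ref{Cor: main} via the associativity $\pi_{\lambda_1,\ldots,\lambda_r}=\pi_{\lambda_1+\cdots+\lambda_{r-1},\lambda_r}\circ(\pi_{\lambda_1,\ldots,\lambda_{r-1}}\otimes\mathrm{id})$, accumulating the $q$-shift $q^{(\beta_k,\lambda_1+\cdots+\lambda_{k-1})}$ produced at the $k$-th two-factor step. Since $[L(b_k)]=G^\up_{\lambda_k}(b_k)$ in our symmetric, characteristic $0$ setting, these shifts sum to $t=\sum_{1\le i<j\le r}(\beta_j,\lambda_i)$ and give
\[
\pi_{\lambda_1,\ldots,\lambda_r}\bigl(G^\up_{\lambda_1}(b_1)\otimes\cdots\otimes G^\up_{\lambda_r}(b_r)\bigr)=q^t\,[M].
\]
Hypothesis~(i) and Proposition~\ref{Prop: global} then yield the congruence $q^t[M]\equiv[L(b_1\otimes\cdots\otimes b_r)]\pmod{qL^\up(\lambda)}$.

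Next, I would expand $q^t[M]=\sum_{b\in B(\lambda)}A_b(q)\,[L(b)]$. Corollary~\ref{Cor: positivity} gives $A_b(q)\in\Z_{\ge 0}[q,q^{-1}]$, and intersecting the above congruence with $\Z[q,q^{-1}]$ (using $q\A_0\cap\Z[q,q^{-1}]=q\Z[q]$) refines this to $A_{b_1\otimes\cdots\otimes b_r}(q)\in 1+q\Z_{\ge 0}[q]$ and $A_b(q)\in q\Z_{\ge 0}[q]$ for $b\ne b_1\otimes\cdots\otimes b_r$. Thus every $A_b(q)\in\Z_{\ge 0}[q]$, so $q^tM$ has no composition factor in strictly negative graded degree, and the unique composition factor in graded degree $0$ is $L(b_1\otimes\cdots\otimes b_r)$ with multiplicity~$1$.

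Finally, by hypothesis~(ii) we may write $q^t\hd(M)=q^c\,L(b^*)$ for a single pair $(c,b^*)$; this graded simple is a composition factor of $q^tM$, so positivity forces $c\ge 0$, and $c=0$ is possible only for $b^*=b_1\otimes\cdots\otimes b_r$ by the previous step. To rule out $c>0$, I would invoke the module-theoretic input that $M=\Ind_{\beta_1,\ldots,\beta_r}(L(b_1)\otimes\cdots\otimes L(b_r))$ is cyclically generated by the image of the external tensor, which, combined with the symmetric grading of self-dual simples and the $q^t$-shift, confines the generators of $q^tM$ and hence $\hd(q^tM)$ to graded degree $\le 0$. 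Combining $0\le c\le 0$ yields $c=0$ and $b^*=b_1\otimes\cdots\otimes b_r$, completing the identification $q^t\hd(M)\simeq L(b_1\otimes\cdots\otimes b_r)$. The hardest part is precisely this last grading bound $c\le 0$: positivity and the global-basis congruence pin down all composition multiplicities but do not by themselves identify which subquotient is the head, so KLR-specific module-theoretic input on the cyclic generation of the convolution product is needed to match the grading of $\hd$ to the distinguished degree-$0$ coefficient $A_{b_1\otimes\cdots\otimes b_r}(0)=1$.
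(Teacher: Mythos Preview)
Your first two steps are correct and coincide with the paper's argument: iterating Corollary~\ref{Cor: main} gives $\pi_{\lambda_1,\ldots,\lambda_r}\bigl(G^\up_{\lambda_1}(b_1)\otimes\cdots\otimes G^\up_{\lambda_r}(b_r)\bigr)=q^t[M]$, and Proposition~\ref{Prop: global} together with positivity yields $q^t[M]=\sum_b A_b(q)[L(b)]$ with every $A_b(q)\in\Z_{\ge 0}[q]$ and $A_{b_1\otimes\cdots\otimes b_r}(q)\in 1+q\Z_{\ge 0}[q]$.

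The gap is in your final step, the inequality $c\le 0$. The cyclic-generation argument does not deliver it. The image of $L(b_1)\boxtimes\cdots\boxtimes L(b_r)$ in $M$ occupies a range of degrees symmetric about $0$ (each factor is self-dual), so after the shift by $q^t$ with $t=\sum_{i<j}(\beta_j,\lambda_i)\ge 0$ the generators of $q^tM$ lie in degrees symmetric about $t$, not in degrees $\le 0$. Even if $q^tM$ \emph{were} generated in nonpositive degree, passing to the simple quotient $q^cL(b^*)$ would only force $q^cL(b^*)$ to have some nonzero component in a nonpositive degree, giving $c\le D'$ where $D'$ is the top degree of $L(b^*)$, not $c\le 0$. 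So neither the premise nor the implication in your last paragraph holds as stated; knowing the full list of graded composition factors simply does not, by formal homological algebra alone, tell you which one is the head.

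The paper fills exactly this gap by invoking \cite[Theorem~4.2.1]{KKKO17}: for any $M\in R\gmod$ with simple head, if $q^{-s}\hd(M)$ is self-dual then in the Grothendieck group $[M]=[\hd M]+\sum_k q^{s_k}[S_k]$ with self-dual simples $S_k$ and all $s_k>s$. In other words, among all composition factors written with self-dual representatives, the head is the one with the \emph{smallest} $q$-shift. Choosing the integer $c$ so that $q^{c}\hd(M)$ is self-dual, this gives $q^{c}[M]\equiv q^{c}[\hd M]\pmod{qL^\up(\lambda)}$ with $q^{c}[\hd M]\in\mathbf{B}^\up(\lambda)$; comparing with the congruence $q^{t}[M]\equiv[L(b_1\otimes\cdots\otimes b_r)]\pmod{qL^\up(\lambda)}$ forces $c=t$ and hence $q^{t}\hd(M)\simeq L(b_1\otimes\cdots\otimes b_r)$. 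This cited theorem is a genuine structural result about graded $R$-modules and is precisely the ``KLR-specific module-theoretic input'' you allude to; it is not a consequence of cyclic generation.
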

\begin{proof}
It was shown in \cite[Theorem 4.2.1]{KKKO17}  that 
for $M \in R\gmod$,
if $\hd (M)$ is simple and  $q^{-s} (\hd (M))$ is self-dual for some $s\in \Z$, then in the Grothendieck group $[R\gmod]$
\[
[M] = [\hd (M)] + \sum_{k} q^{s_k}[S_k] 
\]
for some self-dual simple modules $S_k$ and some $s_k > s$.

We choose an integer $c$  such that $q^{ c  }\hd (L(b_1) \conv L(b_2)\conv \cdots \conv L(b_r))$ is self-dual.
Then we have 
\[
q^{ c} [L(b_1) \conv L(b_2)\conv \cdots \conv L(b_r) ] = q^{c} [\hd (L(b_1) \conv L(b_2)\conv \cdots \conv L(b_r)) ] + \sum_{k} q^{c_k}[S_k] 
\]
for some self-dual simple modules $S_k$ and some $c_k > 0$ in $[R^\lambda\gmod]$, where $\lambda=\lambda_1+\cdots +\lambda_r$. 
In other word, we know that 
$q^{  c } [\hd (L(b_1) \conv L(b_2)\conv \cdots \conv L(b_r)) ]$ belongs to the upper global basis and 
\[
q^{  c } [L(b_1) \conv L(b_2)\conv \cdots \conv L(b_r) ] \equiv q^{ c} [\hd (L(b_1) \conv L(b_2)\conv \cdots \conv L(b_r)) ] \mod q L^\up( \lambda ),
\]
since the upper global basis is an $\A_0$-basis of $L^\up(\lambda)$.

On the other hand, by Corollary \ref{Cor: main} and Proposition \ref{Prop: global}
we have 
\begin{align*}
q^{t}  [L(b_1) \conv \cdots \conv L(b_r) ]  &=  \pi_{\lambda_1, \ldots, \lambda_r} ([L(b_1)] \otimes [L(b_2)] \otimes \cdots \otimes [L(b_r)]) \\ 
&\equiv [L(b_1 \otimes b_2 \otimes \cdots \otimes b_r)] \qquad {\mathrm{mod} \ \ qL^{\up} (  \lambda ). }
\end{align*}
 Hence we conclude that  $c=t$ and 
\[
  q^t  \hd( L(b_1) \conv \cdots \conv L(b_r) ) \simeq L( b_1 \otimes \cdots \otimes b_r ),
\]
as desired.
\end{proof}

Recall that for each pair of nonzero modules $M$ and $N$ in $R\gmod$,  there exists an integer $\Lambda(M,N)$ and a distinguished nonzero homomorphism $\rmat{M,N} : M\conv N \to q^{-\Lambda(M,N)}N \conv M$, called the \emph{R-matrix}.  For the definition and properties of R-matrices, we refer \cite[Section 2.2, 3.1, 3.2]{KKKO17}.
Set 
$$\widetilde \Lambda(M,N) : = \dfrac{1}{2}(\Lambda(M,N)+(\wt M ,\wt N)), \quad 
\Dd(M,N) := \dfrac{1}{2}(\Lambda(M,N) + \Lambda(N,M)).
$$
If $M$ and $N$ are simple $R$-modules and one of them is real, then 
 $M$ and $N$ strongly commute if and only if $\Dd(M,N)=0$ (\cite[Lemma 3.2.3]{KKKO17}).

\begin{cor} \label{Cor: app_tensor}
Let $\lambda_1, \lambda_2 \in \wlP^+$ and $ b_1 \otimes b_2 \in C_{\lambda_1, \lambda_2}$. 
We set $\beta_{k} := \lambda_k -\wt  (b_k)  \in  \rlQ_+$ for $k=1,2$.
Suppose that  one of $L(b_1)$ and $L(b_2)$ is real. Then we have
\begin{enumerate}
\item  $ q^{(\beta_2, \lambda_1)}   L(b_1) \hconv L(b_2) \simeq L(b_1 \otimes b_2)$,
\item 
$\widetilde \Lambda(L(b_1), L(b_2)) =(\beta_2, \lambda_1)$ and  
$\Lambda(L(b_1),L(b_2))=(\beta_2, 2\lambda_1-\beta_1)$,
\item 
if there are $\lambda_1', \lambda_2' \in \wlP^+$ and  $\ b'_2\otimes b'_1 \in C_{\lambda'_2,\lambda'_1}$ such that 
$L(b_1)\simeq L(b_1')$ and $L(b_2) \simeq L(b_2')$,
then we have
$$\Dd(L(b_1), L(b_2))=(\beta_1,\lambda'_2)+(\beta_2,\lambda_1)-(\beta_1,\beta_2)=(\lambda_1,\lambda'_2)-(\wt(b_1), \wt (b'_2)).$$
In particular, $L(b_1)$ and $L(b_2)$ strongly commute if and only if $(\lambda_1,\lambda_2')=(\wt (b_1), \wt (b_2')).$
\end{enumerate}
\end{cor}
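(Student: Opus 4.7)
The plan is to deduce all three assertions from Theorem~\ref{Thm: app_tensor} combined with standard properties of R-matrices and self-dualization of heads of convolution products from \cite{KKKO17}.

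For part~(1), I would first verify condition~(ii) of Theorem~\ref{Thm: app_tensor}: when one of $L(b_1)$ and $L(b_2)$ is real, the convolution $L(b_1) \conv L(b_2)$ is known to have a simple head (a standard fact about real simple modules proved in \cite{KKKO17}). Condition~(i) is the given hypothesis $b_1 \otimes b_2 \in C_{\lambda_1, \lambda_2}$. Applying Theorem~\ref{Thm: app_tensor} with $r=2$ and $t = (\beta_2, \lambda_1)$ then immediately yields $q^{(\beta_2, \lambda_1)} L(b_1) \hconv L(b_2) \simeq L(b_1 \otimes b_2)$.

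For part~(2), since $L(b_1 \otimes b_2)$ is self-dual by definition, part~(1) says that $q^{(\beta_2, \lambda_1)}(L(b_1) \hconv L(b_2))$ is self-dual. On the other hand, a standard property of R-matrices established in \cite[\S 3.2]{KKKO17} asserts that if $M, N$ are self-dual simples with one real and $M \hconv N$ is simple, then the self-dualizing grading shift of $M \hconv N$ is exactly $\widetilde\Lambda(M,N)$. Matching shifts gives $\widetilde\Lambda(L(b_1), L(b_2)) = (\beta_2, \lambda_1)$. Using the identity $\Lambda(M,N) = 2\widetilde\Lambda(M,N) - (\wt M, \wt N)$ together with $\wt L(b_i) = -\beta_i$ in the root-lattice convention, a direct computation yields
\[
\Lambda(L(b_1), L(b_2)) = 2(\beta_2, \lambda_1) - (\beta_1, \beta_2) = (\beta_2, 2\lambda_1 - \beta_1).
\]

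For part~(3), I observe that $L(b_i) \simeq L(b_i')$ forces these simple modules to sit in the same block, so $\beta_i = \beta_i'$ for $i=1,2$. Since R-matrices are module-theoretic invariants, we have $\Lambda(L(b_2), L(b_1)) = \Lambda(L(b_2'), L(b_1'))$, and applying part~(2) to the pair $b_2' \otimes b_1' \in C_{\lambda_2', \lambda_1'}$ gives $\Lambda(L(b_2'), L(b_1')) = (\beta_1, 2\lambda_2' - \beta_2)$. Averaging with the value from part~(2) produces
\[
\Dd(L(b_1), L(b_2)) = (\beta_2, \lambda_1) + (\beta_1, \lambda_2') - (\beta_1, \beta_2),
\]
and the alternative formula $(\lambda_1, \lambda_2') - (\wt(b_1), \wt(b_2'))$ follows by expanding $\wt(b_1) = \lambda_1 - \beta_1$ and $\wt(b_2') = \lambda_2' - \beta_2$ and invoking bilinearity of $(\cdot, \cdot)$. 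The final ``in particular'' assertion is then immediate from the criterion, recalled just above the statement of the corollary, that $L(b_1)$ and $L(b_2)$ strongly commute if and only if $\Dd(L(b_1), L(b_2)) = 0$. The main obstacle will be pinning down the precise normalization of the self-dualization formula for $M \hconv N$ in \cite{KKKO17} so that the shift matches the value computed from Theorem~\ref{Thm: app_tensor}; once this sign convention is matched, everything else is routine bookkeeping with the bilinear form.
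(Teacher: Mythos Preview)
Your proposal is correct and follows essentially the same route as the paper: part~(1) via Theorem~\ref{Thm: app_tensor} once the simple-head condition is supplied by the real-module hypothesis, part~(2) by matching the self-dualizing shift from~(1) with the known formula $\widetilde\Lambda(M,N)$ for that shift, and part~(3) by applying~(2) to the reversed pair and averaging. The only minor discrepancy is bibliographic: the simple-head result for convolutions with a real factor is \cite[Theorem~3.2]{KKKO15a} rather than \cite{KKKO17}, and the self-dualizing shift statement is \cite[Lemma~3.1.4]{KKKO17}.
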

\begin{proof}
(1) As either of $L(b_1)$ and $L(b_2)$ is real, $L(b_1)\conv L(b_2)$ has a simple head by \cite[Theorem 3.2]{KKKO15a}.
Thus it follows  from  Theorem \ref{Thm: app_tensor}.

(2)
 By (1), we know that $q^{(\beta_2,\lambda_1)}L(b_1)\hconv L(b_2)$ is self-dual. It follows that 
$(\beta_2,\lambda_1) = \widetilde \Lambda(L(b_1), L(b_2))$ by \cite[Lemma 3.1.4]{KKKO17}. 
The second follows from it by the definition.

(3)
It is straightforward to prove from (1) and (2).
\end{proof}

\begin{lem} \label{Lem: nonzero}
Let $V$ and $U$ be nonzero  
modules over a  ring $A$, and 
$f : V \rightarrow U$ be a nonzero $A$-module homomorphism. Suppose that  
$V$ has a unique maximal submodule $M$. Then the induced homomorphism 
$$
\overline{f}: V / M \rightarrow U/ f(M) \text{ is nonzero.}
$$
\end{lem}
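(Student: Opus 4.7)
The plan is a direct argument by contradiction. Suppose $\overline{f} = 0$; then by definition of the induced map, $f(v) \in f(M)$ for every $v \in V$, so $f(V) \subseteq f(M)$. Since the reverse inclusion is automatic, $f(V) = f(M)$. Given $v \in V$, this surjectivity yields some $m \in M$ with $f(v) = f(m)$, whence $v - m \in \Ker(f)$ and so $v \in M + \Ker(f)$. Consequently
$$V = M + \Ker(f).$$

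Now invoke the assumptions. Since $f \neq 0$, $\Ker(f)$ is a proper submodule of $V$. The hypothesis that $V$ has a unique maximal submodule $M$ means (in the finitely generated/finite-length setting of the paper, where every proper submodule is contained in some maximal one) that every proper submodule of $V$ lies inside $M$; applying this to $\Ker(f)$ gives $\Ker(f) \subseteq M$. Substituting into the displayed equation yields $V = M$, contradicting the properness of the maximal submodule $M$. Therefore $\overline{f} \neq 0$.

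The only real subtlety is the containment $\Ker(f) \subseteq M$, which rests on the standard fact that in the ambient setting every proper submodule is contained in a maximal one; combined with uniqueness of $M$, there is nothing else it could be contained in. No deeper machinery beyond this observation and the definition of the induced map is required.
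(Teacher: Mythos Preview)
Your argument is correct and essentially the same as the paper's: both hinge on the fact that every proper submodule of $V$ lies in $M$, the paper applying it to a cyclic submodule $Av$ (with $v\notin M$ and $f(v)=0$) to force $f=0$, while you apply it directly to $\Ker(f)$ to force $V=M$. Your explicit caveat that this containment needs a finiteness hypothesis is well placed---the paper's step ``$Av=V$ since $Av\not\subset M$'' tacitly uses the same fact, and in the paper the lemma is only invoked for finite-dimensional modules where it is unproblematic.
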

\begin{proof}
Without loss of generality, we may assume that $f$ is surjective.
Then, it suffices to show that $f(M) \ne U$.

We assume that $f(M) = U$. Let $x \in V \setminus M$ and take $y \in M$ such that $f(x) = f(y)$. We set $v : = x-y \in V$. Then we have
$v \notin M$, $f(v)=0$ and $A v = V$ since $A v \not\subset M$. 
Thus  $\Im(f) = f(V) = f(Av) = 0$, which is a contradiction to nonzeroness of $f$.
\end{proof}

\begin{thm} \label{Thm: sR}
Let $\Lambda, \lambda_1, \lambda_2, \lambda_1', \lambda_2' \in \wlP^+$ with 
$ \Lambda = \lambda_1+\lambda_2 = \lambda_1' + \lambda_2' $, and
let  $b_k \in B(\lambda_k)$, $b'_k \in B(\lambda_k')$ for $k=1,2$.
We set $\beta_k := \lambda_k - \wt(b_k)$ and $\beta_k' := \lambda_k' - \wt(b_k')$ for $k=1,2$.
Suppose that  
\begin{enumerate}
\item[(i)] $b_1 \otimes b_2 \in C_{\lambda_1, \lambda_2}$,
\item[(ii)] $b_1 \otimes b_2 \simeq b'_1 \otimes b'_2$,
\item[(iii)] one of $G^{up}(b_1)$ and $G^{up}(b_2)$ (resp.\  one of  $G^{up}(b'_1)$ and $G^{up}(b'_2)$ ) is real. 
\end{enumerate}
Then we have 
\begin{enumerate}
\item $ \Hom_{R\gmod} ( q^d L(b_1) \conv  L(b_2),  L(b'_2) \conv L(b'_1))  \simeq {\mathbf k}$,  where $d={(\beta_2,\lambda_1) +(\beta'_2,\lambda_1') - (\beta_1', \beta_2')}$, 
\item  for any nonzero homomorphism $\varphi \in  \Hom_{R\gmod} ( q^d L(b_1) \conv  L(b_2),  L(b'_2) \conv L(b'_1)) $, we have  
$$
q^d \Im (\varphi) \simeq q^d L(b_1) \hconv L(b_2) \simeq  L(b'_2) \sconv L(b'_1). 
$$

\end{enumerate}
\end{thm}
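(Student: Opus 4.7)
The plan is to show that $X := q^d L(b_1)\conv L(b_2)$ has a simple head and $Y := L(b'_2)\conv L(b'_1)$ has a simple socle, both isomorphic as graded modules to the same self-dual simple $R^{\Lambda}$-module $S$, and then to use a graded composition-multiplicity argument to force any nonzero $\varphi \in \Hom(X,Y)$ to satisfy $\Im\varphi = S$; parts (1) and (2) both follow.

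First, I identify the common simple $S$. By (i), $b_1\otimes b_2\in C_{\lambda_1,\lambda_2}$, the connected component of highest weight $\Lambda$; by (ii) the connected component of $b'_1\otimes b'_2$ is crystal-isomorphic to this, hence also has highest weight $\Lambda = \lambda'_1+\lambda'_2$, so it must coincide with $C_{\lambda'_1,\lambda'_2}$. Under these identifications both tensors correspond to a common $b\in B(\Lambda)$, so $L(b_1\otimes b_2)\simeq L(b'_1\otimes b'_2)=:L(b)$. Corollary \ref{Cor: app_tensor}(1) applied to each pair gives $\hd X = q^{d-(\beta_2,\lambda_1)}L(b)$ and $L(b'_1)\hconv L(b'_2)\simeq q^{-(\beta'_2,\lambda'_1)}L(b)$. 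To identify $\Soc Y$ I apply the $R$-matrix $\rmat{L(b'_1),L(b'_2)}\colon L(b'_1)\conv L(b'_2)\to q^{-\Lambda(L(b'_1),L(b'_2))}L(b'_2)\conv L(b'_1)$; since one of $L(b'_1),L(b'_2)$ is real, by \cite[Theorem 3.2]{KKKO15a} together with the standard $R$-matrix properties in \cite{KKKO17}, its image is simple and coincides (after the shift) with both the head of the source and the socle of the target. Combining this with $\Lambda(L(b'_1),L(b'_2))=(\beta'_2,2\lambda'_1-\beta'_1)$ from Corollary \ref{Cor: app_tensor}(2) yields $\Soc Y\simeq q^{(\beta'_2,\lambda'_1)-(\beta'_1,\beta'_2)}L(b)$. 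The definition of $d$ is calibrated precisely so that $\hd X = \Soc Y =: S$.

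Composing the canonical projection $X\twoheadrightarrow\hd X = S$ with the inclusion $S = \Soc Y\hookrightarrow Y$ yields a nonzero element of $\Hom(X,Y)$, proving $\Hom(X,Y)\ne 0$. For uniqueness, I use Proposition \ref{Prop: global} and Theorem \ref{Thm: positivity} to control the graded multiplicity $[X:S]$. Writing
\[
\pi_{\lambda_1,\lambda_2}\bigl(G^{\up}_{\lambda_1}(b_1)\otimes G^{\up}_{\lambda_2}(b_2)\bigr) = \sum_{b''}A_{b''}(q)\,G^{\up}_{\Lambda}(b''),
\]
Proposition \ref{Prop: global} gives $A_b(q)\equiv 1\pmod{q\A_0}$ and Theorem \ref{Thm: positivity}(1) gives $A_b(q)\in\Z_{\ge 0}[q^{\pm 1}]$; together these force $A_b(q)\in 1+q\Z_{\ge 0}[q]$, so $[X:S] = A_b(0) = 1$. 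Now let $\varphi\in\Hom(X,Y)$ be nonzero. Lemma \ref{Lem: nonzero} applied to $\Rad X\subset X$ shows $\Im\varphi$ has $S$ as a simple quotient, and $\Im\varphi$ being a nonzero submodule of $Y$ forces $\Im\varphi\supset\Soc Y = S$. Since $[\Im\varphi:S]\le[X:S]=1$, any module having $S$ in both its head and its socle with total multiplicity one must coincide with $S$; hence $\Im\varphi = S$. Therefore $\varphi$ factors uniquely up to scalar as $X\twoheadrightarrow S\hookrightarrow Y$, giving $\Hom(X,Y)\simeq\mathbf{k}$, and part (2) is immediate from $\Im\varphi = S = q^d L(b_1)\hconv L(b_2) = L(b'_2)\sconv L(b'_1)$. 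The main bookkeeping obstacle is ensuring that all grading shifts align through the formula for $d$; the realness hypothesis (iii) is used exactly twice, once for the simplicity of $\hd X$ and once for the simplicity of $\Soc Y$.
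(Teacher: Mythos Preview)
Your proof is correct and follows the same architecture as the paper's: identify the common simple $S$ as both $\hd X$ and $\soc Y$ via Corollary~\ref{Cor: app_tensor} and the $R$-matrix, exhibit a nonzero homomorphism by composing the projection $X\twoheadrightarrow S$ with the inclusion $S\hookrightarrow Y$, and then use Lemma~\ref{Lem: nonzero} together with a multiplicity-one constraint to force every nonzero $\varphi$ to have image exactly $S$.

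The one substantive difference is the source of the multiplicity-one input. The paper quotes \cite[Theorem~4.1.1]{KKKO17} to obtain $[L(b'_2)\conv L(b'_1):S]=1$ and runs the contradiction on the target side: if $\varphi(\Rad X)\ne 0$ then $S$, being the socle of $Y$, would sit inside $\varphi(\Rad X)$ and also appear in $Y/\varphi(\Rad X)$, giving multiplicity at least two in $Y$. You instead establish $[q^d L(b_1)\conv L(b_2):S]=1$ on the source side, combining Proposition~\ref{Prop: global} (which gives $A_b(q)\equiv 1\bmod q\A_0$) with the positivity $A_b(q)\in\Z_{\ge 0}[q^{\pm1}]$ from Corollary~\ref{Cor: positivity} to force $A_b(q)\in 1+q\Z_{\ge 0}[q]$, hence $A_b(0)=1$; then $[\Im\varphi:S]\le 1$ while $S$ lies in both the head and the socle of $\Im\varphi$, forcing $\Im\varphi=S$. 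Your route stays entirely within the machinery developed in Sections~\ref{Sec: quantum groups}--\ref{Sec: tensor and convolution} and makes transparent why the particular grading shift $d$ appears; the paper's route is a one-line appeal to known structure theory of convolutions with a real factor. Both arguments are valid and of comparable length.
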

\begin{proof}
As $b_1 \otimes b_2 \in C_{\lambda_1, \lambda_2}$ and $b_1 \otimes b_2 \simeq b'_1 \otimes b'_2$, the connected component of $B(\lambda_1')\otimes B(\lambda_2')$ containing $b'_1\otimes b'_2$
should be isomorphic to $B(\Lambda)$. Thus, $b'_1 \otimes b'_2$ is contained in $C_{\lambda_1', \lambda_2'}$ since 
$C_{\lambda_1', \lambda_2'}$ is a unique connected component of $B(\lambda_1')\otimes B(\lambda_2')$ which is isomorphic to $B(\Lambda)$.
It follows from Corollary \ref{Cor: app_tensor} that 
\begin{align*} 
  q^{(\beta_2 ,\lambda_1)}  L(b_1) \hconv L(b_2) \simeq L(b_1 \otimes b_2) \simeq L(b'_1 \otimes b'_2) \simeq 
  q^{(\beta'_2 ,\lambda_1')}  L(b'_1) \hconv L (b'_2).
\end{align*}

 As $L(b'_1) \hconv L(b'_2) \simeq   q^{-\Lambda(L(b'_1),L(b'_2))}  L(b'_2) \sconv L(b'_1)$  
 \cite[Theorem 3.2]{KKKO15a},
we have a chain of homomorphisms 
\begin{align}  \label{Eq: homo}
q^d L(b_1) \conv L(b_2) \twoheadrightarrow q^d L(b_1)\hconv L(b_2) \buildrel \sim \over \rightarrow 
 L(b'_2) \sconv L(b'_1) \rightarrowtail   L(b'_2) \conv L(b'_1),
\end{align}
 where 
 $$
 d={(\beta_2,\lambda_1) - (\beta'_2,\lambda_1')+\Lambda(L(b'_1),L(b'_2))}
  = (\beta_2,\lambda_1) + (\beta'_2,\lambda_1')-(\beta_1', \beta_2')
 $$
by Corollary  \ref{Cor: app_tensor}.
Moreover the composition of the homomorphisms in $\eqref{Eq: homo}$ is nonzero.
Thus  we conclude that 
\[
\dim_{\mathbf k} (  \Hom_{R\gmod} ( q^d L(b_1) \conv  L(b_2),  L(b'_2) \conv L(b'_1))) \ge 1.
\]

We now set $L := q^d L(b_1) \hconv L(b_2) \simeq  L(b'_2)\sconv L(b'_1)$. By \cite[Theorem 4.1.1]{KKKO17}, we know that 
\begin{align} \label{Eq: once}
\text{$L$ appears only once in the composition series of $L(b'_2)\conv L(b'_1)$.}
\end{align}
As $L(b_1)\hconv L(b_2)$ is simple, $L(b_1)\conv L(b_2)$ 
has a unique maximal submodule which is denoted by $M$.
Let $f : L(b_1) \conv L(b_2) \to  L(b'_2) \conv L(b'_1)$ be a nonzero graded homomorphism. 
Then $\eqref{Eq: once}$ tells that the degree of $f$ should be $d$.
We consider the commutative diagram:
\begin{align*} 
\xymatrix
{
q^d L(b_1)\conv L(b_2)  \ar@{->}[d] \ar@{->}[rr]^{f}  &&   L(b'_2)\conv L(b'_1)    \ar@{->}[d]  \\
 q^d (L(b_1)\conv L(b_2) )/M \ar@{->}[rr]^{\overline{f} }   && (L(b'_2)\conv L(b'_1))/f(M)
}
\end{align*} 
where $\overline{f}$ is the homomorphism induced from $f$. Note that $\overline{f}$ is nonzero by Lemma \ref{Lem: nonzero}.
Since $ L \simeq (L(b_1)\conv L(b_2) )/M $ is simple, 
\begin{align*} 
\text{$L$ appears in the composition series of $  (L(b'_2)\conv L(b'_1))/f(M)$.}
\end{align*}

Suppose that $f(M) \ne 0$. 
Then $L$ should appear in the composition series of $f(M)$ since $ L \simeq L(b'_2)\sconv L(b'_1)$. This tells that $L$ appears at least twice in the 
composition series of $  L(b'_2)\conv L(b'_1)$, which is a contradiction to $\eqref{Eq: once}$.
Hence, we conclude that $f(M) =0$, which means that $f$ can be obtained from the composition of the homomorphisms in $\eqref{Eq: homo}$.
Therefore, we complete the proof.
\end{proof}

\begin{cor} \label{Cor: app2}
Let $\lambda_1, \lambda_2 \in \wlP^+$, $b_1 \otimes b_2 \in C_{\lambda_1, \lambda_2}$ and $b_2 \otimes b_1 \in C_{\lambda_2,\lambda_1}$.
Suppose that 
 one of $L(b_1)$ and $L(b_2)$ is real.
Then  the followings are equivalent. 
\begin{enumerate}
\item $b_1 \otimes b_2 \simeq b_2 \otimes b_1$.
\item $\text{$L(b_1)$ and $L(b_2)$ strongly commute}.$
\item $(\lambda,\mu)=(\wt (b_1), \wt (b_2)).$
\end{enumerate}
\end{cor}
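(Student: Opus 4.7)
My plan is to prove the three-way equivalence by first establishing (2) $\Leftrightarrow$ (3) directly from Corollary~\ref{Cor: app_tensor}, and then (1) $\Leftrightarrow$ (2) via head-and-socle analysis of the convolution $L(b_1)\conv L(b_2)$.

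For (2) $\Leftrightarrow$ (3), the hypothesis $b_2\otimes b_1\in C_{\lambda_2,\lambda_1}$ lets me invoke Corollary~\ref{Cor: app_tensor}(3) with $\lambda_k'=\lambda_k$ and $b_k'=b_k$, yielding $\Dd(L(b_1),L(b_2))=(\lambda_1,\lambda_2)-(\wt(b_1),\wt(b_2))$. Condition (3) is therefore equivalent to $\Dd(L(b_1),L(b_2))=0$, which under the reality hypothesis is equivalent to strong commutation by the criterion \cite[Lemma~3.2.3]{KKKO17} quoted just before Corollary~\ref{Cor: app_tensor}.

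For (1) $\Rightarrow$ (2), I apply Corollary~\ref{Cor: app_tensor}(1) in both orders, obtaining $q^{(\beta_2,\lambda_1)}L(b_1)\hconv L(b_2)\simeq L(b_1\otimes b_2)$ and $q^{(\beta_1,\lambda_2)}L(b_2)\hconv L(b_1)\simeq L(b_2\otimes b_1)$. By (1), $b_1\otimes b_2$ and $b_2\otimes b_1$ correspond to the same vertex of $B(\lambda_1+\lambda_2)$, so $L(b_1\otimes b_2)\simeq L(b_2\otimes b_1)$ as (ungraded) self-dual simples. Combined with the standard R-matrix identification $L(b_1)\sconv L(b_2)\simeq q^{\Lambda(L(b_2),L(b_1))}L(b_2)\hconv L(b_1)$, this shows that the head and the socle of $L(b_1)\conv L(b_2)$ are isomorphic as ungraded simples. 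Since one of $L(b_1),L(b_2)$ is real, \cite[Theorem~4.1.1]{KKKO17} tells me that this common simple has graded multiplicity a single monomial in the composition series of $L(b_1)\conv L(b_2)$; hence the head and socle must coincide as the unique composition factor, forcing $L(b_1)\conv L(b_2)$ to be simple.

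For (2) $\Rightarrow$ (1), strong commutation makes both $L(b_1)\conv L(b_2)$ and $L(b_2)\conv L(b_1)$ simple, hence equal to their respective heads, which by Corollary~\ref{Cor: app_tensor}(1) are specific grading shifts of $L(b_1\otimes b_2)$ and $L(b_2\otimes b_1)$. The R-matrix then furnishes a graded isomorphism between these two simple convolutions, giving $L(b_1\otimes b_2)\simeq q^n L(b_2\otimes b_1)$ for some $n\in\Z$; self-duality of both sides forces $n=0$, so $L(b_1\otimes b_2)\simeq L(b_2\otimes b_1)$ as self-dual simples. Under the canonical bijection between $B(\lambda_1+\lambda_2)$ and isoclasses of self-dual simples of $R^{\lambda_1+\lambda_2}$, this identifies $b_1\otimes b_2$ and $b_2\otimes b_1$ as the same crystal element, hence crystal equivalent. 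The main subtlety lies in the (1) $\Rightarrow$ (2) step: I must argue that the graded multiplicity-one statement of \cite[Theorem~4.1.1]{KKKO17}, combined with the ungraded coincidence of head and socle, truly collapses them into one composition factor rather than allowing two distinct graded copies of the same ungraded simple at different gradings.
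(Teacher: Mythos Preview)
Your proof is correct and follows essentially the same route as the paper. The equivalence (2)\,$\Leftrightarrow$\,(3) and the implication (2)\,$\Rightarrow$\,(1) match the paper's argument exactly (the paper's citation of Corollary~\ref{Cor: app_tensor}\,(2) for the former is a typo for (3), as you correctly identified).

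The only packaging difference is in (1)\,$\Rightarrow$\,(2). The paper applies the just-proved Theorem~\ref{Thm: sR} with $(\lambda_1',\lambda_2',b_1',b_2')=(\lambda_2,\lambda_1,b_2,b_1)$, obtaining directly that $L(b_1)\hconv L(b_2)\simeq L(b_1)\sconv L(b_2)$ up to a grading shift, and then invokes \cite[Corollary~3.9]{KKKO15a} to conclude simplicity. You instead reprove this special case of Theorem~\ref{Thm: sR} by hand and appeal to the multiplicity-one statement of \cite[Theorem~4.1.1]{KKKO17}. Both work, but the paper's route dissolves the subtlety you flag at the end: once Theorem~\ref{Thm: sR} hands you the \emph{graded} isomorphism of head and socle (with the explicit shift $d$, which in this symmetric situation equals $\Dd(L(b_1),L(b_2))$), the criterion \cite[Corollary~3.9]{KKKO15a} applies without any need to worry about whether the multiplicity-one result is graded or ungraded. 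Your version requires knowing that \cite[Theorem~4.1.1]{KKKO17} bounds the \emph{ungraded} multiplicity by one (equivalently, that the $q$-multiplicity is a single monomial); this is indeed true, so your argument goes through, but citing Theorem~\ref{Thm: sR} and \cite[Corollary~3.9]{KKKO15a} is the cleaner way to close that step.
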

\begin{proof}
Since (2) and (3) are equivalent by Corollary \ref{Cor: app_tensor} (2), it suffices to show that (1) and (2) are equivalent.

If (1) holds, then it follows from Theorem \ref{Thm: sR} that $L(b_1) \hconv L(b_2) \simeq   L(b_1) \sconv L(b_2) $ up to a grading shift,
 which implies (2) by \cite[Corollary 3.9]{KKKO15a}.
 
 Suppose that (2) holds.
 By Corollary \ref{Cor: app_tensor} (1), we have 
$$L(b_1\otimes b_2) \simeq L(b_1) \hconv L(b_2) =L(b_1)\conv L(b_2) \simeq L(b_2)\conv L(b_1)  =
L(b_2)\hconv L(b_1) \simeq L(b_2\otimes b_1)
$$
up to grading shifts.
Hence we have 
$G^\up_{\lambda_1+\lambda_2}(\overline{\pi}_{\lambda,\mu}(b_1\otimes b_2)) = G^\up_{\lambda_1+\lambda_2}((\overline{\pi}_{\mu,\lambda}(b_2\otimes b_1))$
and hence  $b_1\otimes b_2 \simeq b_2 \otimes b_1,$ as desired.
\end{proof}

\begin{Rmk}
 In the case of Corollary \ref{Cor: app2}, we actually  obtain that 
\begin{align*} 
d&= (\beta_1,\lambda_2)+(\beta_2,\lambda_1) - (\beta_1, \beta_2)
=\Dd(L(b_1),L(b_2))=0,
\end{align*}
where $d$ is the integer given in Theorem \ref{Thm: sR}.
\end{Rmk}

\vskip 2em

\section{Applications to finite type $A$} \label{Sec: application}

Let $\lambda = (\lambda_1 \ge \lambda_2 \ge \ldots \ge \lambda_l > 0)$ be a {\it Young diagram} of size $|\lambda| = \sum_{k=1}^l \lambda_k$.
We denote by $\lambda'$ the {\it conjugate} of $\lambda$, and write  $\emptyset$ for the Young diagram of $0$.  
We will depict a Young diagram using English convention.
Let $\Par_n := \{ \lambda \mid \ell(\lambda) \le n \} $, where $\ell(\lambda)$ is the number of rows of $\lambda$.
A {\it semistandard tableau} $T$ of shape $\lambda \in \Par_n$ is a filling of boxes of $\lambda$ with entries in $\{ 1, 2,\ldots, n\}$ such that 
(i) the entries in rows are weakly increasing from left to right, (ii) the entries in columns are strictly increasing from top to bottom. 
We write $ \sh(T) = \lambda$. 
Let $\SST(\lambda)$ be the set of all semistandard tableaux of shape $\lambda$.
Similarly, 
a {\it standard tableau} of shape $\lambda$ is a filling of boxes of $\lambda$ with $\{1, \ldots, |\lambda| \}$ such that 
(i) each number is used exactly once, (ii) the entries in rows and columns increase from left to right and from top to bottom, respectively.
Let us denote by $\ST(\lambda)$ the set of all standard tableaux of shape $\lambda$.
We write $b = (p,q) \in T$ if $b$ is a box of $T$ at the $p$-th row and the $q$-th column, and  set $T(b)$ to be the entry in the box $b$.  
For example, if $n=8$ and $\lambda = (10,8,4,2)$, then $T (1,3) = 2$, $\lambda' = (4,4,3,3,2,2,2,2,1,1)$ and the following is a semistandard tableau of shape $\lambda$.
$$
\xy
(0,12)*{};(60,12)*{} **\dir{-};
(0,6)*{};(60,6)*{} **\dir{-};
(0,0)*{};(48,0)*{} **\dir{-};
(0,-6)*{};(24,-6)*{} **\dir{-};
(0,-12)*{};(12,-12)*{} **\dir{-};
(0,12)*{};(0,-12)*{} **\dir{-};
(6,12)*{};(6,-12)*{} **\dir{-};
(12,12)*{};(12,-12)*{} **\dir{-};
(18,12)*{};(18,-6)*{} **\dir{-};
(24,12)*{};(24,-6)*{} **\dir{-};
(30,12)*{};(30,0)*{} **\dir{-};
(36,12)*{};(36,0)*{} **\dir{-};
(42,12)*{};(42,0)*{} **\dir{-};
(48,12)*{};(48,0)*{} **\dir{-};
(54,12)*{};(54,6)*{} **\dir{-};
(60,12)*{};(60,6)*{} **\dir{-};
(3,9)*{1}; (9,9)*{1}; (15,9)*{2}; (21,9)*{3}; (27,9)*{3}; (33,9)*{3}; (39,9)*{5};(45,9)*{6};(51,9)*{7}; (57,9)*{8}; 
(3,3)*{2}; (9,3)*{3}; (15,3)*{3}; (21,3)*{4}; (27,3)*{4}; (33,3)*{6}; (39,3)*{6}; (45,3)*{7}; 
(3,-3)*{4}; (9,-3)*{5}; (15,-3)*{5}; (21,-3)*{7};
(3,-9)*{6}; (9,-9)*{8};
\endxy
$$
\vskip 0.5em

We assume that the Cartan matrix $\cmA$ is of type $A_{n-1}$ with $I=\{1,2, \ldots, n-1 \}$, the field $\bR$ is of characteristic 0 and 
the quiver Hecke algebra $R$ is symmetric. Note that, in type $A_{n-1}$, any quiver Hecke algebra is isomorphic to a symmetric quiver Hecke algebra. 
 Let $\lambda $ be a Young diagram with $\ell(\lambda)<n$  and $\mu := \lambda '$.
We write $\mu  = (\mu_1, \ldots, \mu_r)$  and  set $\Lambda := \sum_{k=1}^r  \Lambda_{\mu_k}$.
Then it was known that $\SST(\lambda)$ has  a $U_q(\g)$-crystal structure and 
\begin{align*} 
\SST( \lambda) \simeq B(\Lambda)
\end{align*}
 as crystals (see \cite{KN94}, \cite[Chapter 7]{HK02} for details). Moreover, considering the tensor product of crystals, we have the crystal embedding 
\begin{align*}
\SST(\lambda) \longrightarrow \SST(\mu_r') \otimes  \SST(\mu_{r-1}') \otimes  \cdots \otimes  \SST(\mu_1') 
\end{align*}
sending $T$ to $ T_r \otimes T_{r-1} \otimes \cdots \otimes T_1$, 
where $T_k$ is the $k$-th column of $T$ from the left.
For $T \in \SST( \lambda)$, we denote by $L(T)$ the self-dual simple $R$-module corresponding to $T$ under the categorification. 

For $k, l \in I$, there is an explicit combinatorial description of a crystal isomorphism 
\begin{align} \label{Eq: cRmatrix}
\sigma: \SST( k') \otimes \SST( l' ) \buildrel \sim \over \longrightarrow  \SST( l' ) \otimes \SST( k' )
\end{align}
which was explained in \cite[Section 3.5]{NY97}.  Note that $\sigma$ comes from the {\it combinatorial $R$-matrix} 
if we regard $\SST( k')$ and $\SST(l')$ as realizations of the Kirillov-Reshetikhin crystals $B^{k,1}$ and $B^{l,1}$ of type $A_{n-1}^{(1)}$. 
Let us explain the crystal isomorphism $\sigma$ briefly. Let $\mathsf{B} = \{ 00, 10, 01, 11 \}$ and define a $U_q(A_1)$-crystal structure by
\begin{align*} 
\te (00) = \te(11) =  \te (10) = \emptyset, \quad  \te(01)= 10 , \\
\tf (00) = \tf(11) =  \tf (01) = \emptyset, \quad  \tf(10)= 01.
\end{align*}
Note that $\mathsf{B} \simeq B(1) \oplus B(0)^{\oplus 2}$.
We identify $\bigsqcup_{0 \le k_1, k_2 \le n } \SST(k_1') \otimes \SST(k_2')$ with $\mathsf{B}^{\otimes n}$ as follows. 
For $T \in \SST(k')$ and $a =1, \ldots, n$, we write $a \in T$ when $a$ appears in $T$ as an entry. 
Then we have the following bijection 
\begin{align} \label{Eq: b}
\mathsf{b}: \bigsqcup_{0 \le k_1, k_2 \le n } \SST(k_1') \otimes \SST(k_2') \buildrel \sim \over \longrightarrow \mathsf{B}^{\otimes n}
\end{align}
defined by $\mathsf{b}(T_1 \otimes T_2) = b_n \otimes b_{n-1} \otimes \cdots \otimes b_1$, where
$$
b_a = 
\left\{
  \begin{array}{ll}
  00 & \text{ if $a \notin T_1$ and $a \notin T_2$,}    \\
  10 & \text{ if $a \in T_1$ and $a \notin T_2$,}    \\
  01 & \text{ if $a \notin T_1$ and $a \in T_2$,}    \\
  11 &  \text{ if $a \in T_1$ and $a \in T_2$.}
  \end{array}
\right.
$$
Then, for $k \le l$, the crystal isomorphism $\sigma$ in $\eqref{Eq: cRmatrix}$ can be described as 
$$
\sigma(T_1 \otimes T_2) = \mathsf{b}^{-1} ( \te^{l - k}  \mathsf{b} ( T_1 \otimes T_2 )  ).
$$

Let $k\in I$. Every vector of the crystal $B(\Lambda_k)$ is extremal, which tells that $R^{\Lambda_k}(\beta)$ is a simple algebra 
for any $\beta \in  \Lambda_k - \wt( B(\Lambda_k) )$. 
For $T \in \SST(k')$, we denote by $\Sp^T$ the self-dual simple $R^{\Lambda_k}$-module. 
Moreover, $\Sp^T$ is  real since it is a determinantial module \cite[Proposition 4.2]{KKKP17}.
The simple module $\Sp^T$ can be realized in terms of standard tableaux as follows \cite[Section 5]{BK09}.
Let $T \in \SST(k')$ and set $\xi_T := ( t_k-k, t_{k-1}-k+1, \ldots, t_1-1   ) $ and $m := |\xi_T|$, where $t_a$ is the entry in the $a$-th box of $T$ from the top. 
For $ S \in \ST(\xi_T)$ and a box $ b = (p,q) \in S $, we define $\res(b) = q-p+k$. Then we have the {\it residue sequence} of $S$  
$$
\res(S) = (\res(b_m), \res(b_{m-1}),  \ldots , \res(b_1))  \in I^{\Lambda_k - \wt(T)} 
$$ 
where $b_k$ is the box of $S$ with entry $k$.
We set 
$$
\Sp^T := \bigoplus_{S \in \ST(\xi_T)} \bR S
$$
 and define the action of the quiver Hecke algebra by 
\begin{align*}
x_i S = 0, \quad
\tau_j S  = 
\left\{
  \begin{array}{ll}
  s_jS & \text{ if $s_jS$ is standard}, \\
  0 &  \text{ otherwise}
  \end{array}
\right.
\quad
e(\nu) S  = 
\left\{
  \begin{array}{ll}
  S & \text{ if } \nu = \res(S), \\
  0 &  \text{ otherwise,}
  \end{array}
\right.
\end{align*}
where $s_jS$ is the tableau obtained from $S$ by exchanging the entries $j$ and $j+1$.
It is easy to check that $\Sp^T$ is a self-dual simple $R^{\Lambda_k}$-module and $\qch \Sp^T = \sum_{S \in \ST(\xi_T)} \res(S) $.

\begin{thm} \label{Thm: app1}
 Let $\lambda $ be a two-columns Young diagram with $\ell(\lambda)<n$,  and $T \in \SST(\lambda)$. 
We set $T_k$ to be the $k$-th column of $T$ for $k=1,2$, 
and write  $\sigma(T_2 \otimes T_1) = (T_2' \otimes T_1')$,
where $\sigma$ is given in $\eqref{Eq: cRmatrix}$.
We set $\beta_k = \Lambda_{|\sh(T_k)|} - \wt(T_k)$ and $\beta_k' = \Lambda_{|\sh(T_k')|} - \wt(T_k')$ for $k=1,2$.
\begin{enumerate}
\item We have 
$$
\Hom_{R\gmod}( q^d \Sp^{T_2} \conv \Sp^{T_1}, \Sp^{T_1'} \conv \Sp^{T_2'}  ) = \bR,
$$
where $d =  (\beta_1, \Lambda_{|\sh(T_2)|}) + (\beta_1', \Lambda_{ |\sh(T_2')| }) - (\beta_1', \beta_2')$.
\item If $T_1 = T_2'$ and $T_2 = T_1'$, then
$\Sp^{T_1}$ and $\Sp^{T_2}$ strongly commute.
\end{enumerate}
\end{thm}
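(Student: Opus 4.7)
The plan is to reduce Theorem \ref{Thm: app1} to a direct application of Theorem \ref{Thm: sR} and Corollary \ref{Cor: app2}, so the main work is matching notation and checking hypotheses. The essential identifications are $\Sp^{T_k} \simeq L(T_k)$ with $T_k$ viewed as an element of $B(\Lambda_{|\sh(T_k)|})$, that these modules are real (as they are determinantial, by \cite[Proposition 4.2]{KKKP17}), and that the columns of $T$ embed inside the correct connected component of a tensor product of crystals.

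For part (1), I would first set $\Lambda := \Lambda_{|\sh(T_2)|} + \Lambda_{|\sh(T_1)|} = \Lambda_{|\sh(T_2')|} + \Lambda_{|\sh(T_1')|}$ (the second equality holds because $\sigma$ preserves weights). Then I align the data with Theorem \ref{Thm: sR} by taking $b_1 := T_2$, $b_2 := T_1$, $b_1' := T_2'$, $b_2' := T_1'$, with $\lambda_1 := \Lambda_{|\sh(T_2)|}$, $\lambda_2 := \Lambda_{|\sh(T_1)|}$, $\lambda_1' := \Lambda_{|\sh(T_2')|}$, $\lambda_2' := \Lambda_{|\sh(T_1')|}$. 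I then verify the three hypotheses of Theorem \ref{Thm: sR}: (i) $T_2 \otimes T_1$ lies in $C_{\lambda_1,\lambda_2}$ because the crystal embedding $\SST(\lambda) \hookrightarrow \SST(\mu_2') \otimes \SST(\mu_1')$ sends $T$ to $T_2 \otimes T_1$ and identifies $\SST(\lambda)$ with the connected component of $T_\lambda \otimes T_\lambda^{\text{top}}$ containing the highest weight element; (ii) $T_2 \otimes T_1 \simeq T_2' \otimes T_1'$ by definition of $\sigma$, which is a crystal isomorphism; (iii) $\Sp^{T_k}$ and $\Sp^{T_k'}$ are real. Applying Theorem \ref{Thm: sR}(1) then yields a one-dimensional Hom-space in degree $d_{\mathrm{sR}} = (\beta_2^{\mathrm{sR}},\lambda_1)+(\beta_2'^{\mathrm{sR}},\lambda_1')-(\beta_1'^{\mathrm{sR}},\beta_2'^{\mathrm{sR}})$. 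After unwinding the index swap $\beta_1^{\mathrm{sR}} = \beta_2^{\mathrm{app}}$, $\beta_2^{\mathrm{sR}} = \beta_1^{\mathrm{app}}$ and similarly for primes, this integer equals the $d$ in the statement.

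For part (2), suppose $T_1 = T_2'$ and $T_2 = T_1'$. Then $\sigma(T_2 \otimes T_1) = T_1 \otimes T_2$, so $T_1 \otimes T_2 \in C_{\Lambda_{|\sh(T_1)|},\Lambda_{|\sh(T_2)|}}$ and $T_2 \otimes T_1 \simeq T_1 \otimes T_2$ as crystal elements. With $b_1 := T_2 \in B(\Lambda_{|\sh(T_2)|})$ and $b_2 := T_1 \in B(\Lambda_{|\sh(T_1)|})$, both hypotheses of Corollary \ref{Cor: app2} hold (and $\Sp^{T_k}$ is real), and the implication $(1)\Rightarrow(2)$ of that corollary gives strong commutativity of $\Sp^{T_2}$ and $\Sp^{T_1}$, which is the claim up to reordering.

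The only nontrivial ingredient is verifying hypothesis (i), namely that $T_2 \otimes T_1$ lies in the connected component generated by the tensor product of highest weight vectors; the rest is a translation of the abstract theorems into the combinatorial setup. This follows from the general fact that the classical crystal $\SST(\lambda) \simeq B(\Lambda)$ embeds in $B(\Lambda_{|\sh(T_2)|}) \otimes B(\Lambda_{|\sh(T_1)|})$ as the connected component of the highest weight element, a standard property of tensor products of $B(\Lambda_k)$ in type $A$. Once this is in place, Theorem \ref{Thm: sR} and Corollary \ref{Cor: app2} are black boxes and the proof is immediate.
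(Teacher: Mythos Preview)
Your proposal is correct and follows exactly the paper's own approach: the paper's proof is the single sentence ``It follows from Theorem \ref{Thm: sR} and Corollary \ref{Cor: app2}.'' You have supplied the bookkeeping that the paper leaves implicit---the index swap $b_1=T_2$, $b_2=T_1$, $b_1'=T_2'$, $b_2'=T_1'$, the verification that $T_2\otimes T_1\in C_{\lambda_1,\lambda_2}$ via the column-splitting crystal embedding, and the reality of $\Sp^{T_k}$---and your computation of $d$ matches the statement after unwinding the swap.
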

\begin{proof}
It follows from Theorem \ref{Thm: sR} and Corollary \ref{Cor: app2}.
\end{proof}

\begin{Ex} We suppose that $n=4$. Let $\lambda = (2,1)$ and 
$$ T = \begin{tabular}{|c|c|}
     \hline
     2 & 4  \\
     \hline
     3 \\
     \cline{1-1}
   \end{tabular}.
$$
Then, $T \in \SST(\lambda)$, $\mu := \lambda' = (2,1)$, and
$$
T_1 = \begin{tabular}{|c|}
     \hline
     2   \\
     \hline
     3 \\
     \cline{1-1}
   \end{tabular}\ , 
   \qquad 
   T_2 = \begin{tabular}{|c|}
     \hline
     4   \\
     \cline{1-1}
   \end{tabular}\ .
$$
From the bijection $\eqref{Eq: b}$, we have $ \te \mathsf{b}( T_2 \otimes T_1) = \te (10 \otimes 01 \otimes 01 \otimes 00) = 10 \otimes 01 \otimes 10 \otimes 00 $, which
tells
 $\sigma(T_2 \otimes T_1) =   T_2' \otimes T_1' $, where 
$$
   T_1' = \begin{tabular}{|c|}
     \hline
     3   \\
     \cline{1-1}
   \end{tabular}\ , 
   \qquad 
T_2' = \begin{tabular}{|c|}
     \hline
     2   \\
     \hline
     4 \\
     \cline{1-1}
   \end{tabular}.
$$
Thus, $\xi_{T_1} = (1,1)$, $\xi_{T_2} = (3)$, $\xi_{T_1'} = (2)$, and $\xi_{T_2'} = (2,1)$, which tells 
$$
\qch \Sp^{T_1} = (1,2), \ \qch \Sp^{T_2} = (3,2,1), \ \qch \Sp^{T_1'} = (2,1), \ \qch \Sp^{T_2'} = (3,1,2)+(1,3,2). 
$$
By Theorem \ref{Thm: app1}, we obtain $d = (\alpha_1+\alpha_2, \Lambda_1) + (\alpha_1 + \alpha_2, \Lambda_2) - (\alpha_1+\alpha_2, \alpha_1+\alpha_2+\alpha_3) = 1 $ and
\begin{align} \label{Eq: Ex1}
\Hom_{R\gmod}( q\Sp^{T_2} \conv \Sp^{T_1}, \Sp^{T_1'} \conv \Sp^{T_2'}  ) = \bR.
\end{align}

We may also construct directly a nonzero homomorphism in $\eqref{Eq: Ex1}$.
Let us consider the following chain of nonzero homomorphisms:
\begin{align*}
 q  L(3) \conv \Sp^{T_1'} \conv \Sp^{T_1} \buildrel r_1 \conv \mathrm{id} \over \longrightarrow \Sp^{T_1'} \conv L(3) \conv \Sp^{T_1} 
 \twoheadrightarrow \Sp^{T_1'} \conv L(3) \hconv \Sp^{T_1} , 
\end{align*}
where $r_1$ is the $R$-matrix  with $\deg(r_1) = 1$ and $L(3)$ is the self-dual simple module of  $R(\alpha_3)$.  
It is easy to check that the composition of the above homomorphisms  is nonzero and 
\begin{align} \label{Eq: Ex2}
\Im r_1 \simeq  q  L(3) \hconv \Sp^{T_1'} \simeq  q  \Sp^{T_2} , \quad 
  L(3) \hconv \Sp^{T_1} \simeq  \Sp^{T_2'} .
\end{align}
Under the identification $\eqref{Eq: Ex2}$, restricting the projection to $\Im (r_1 \conv \mathrm{id} ) $, 
we have a nonzero homomorphism 
$$
 q \Sp^{T_2} \conv \Sp^{T_1} \longrightarrow \Sp^{T_1'} \conv \Sp^{T_2'}.
$$
Moreover, one can prove directly that $\Sp^{T_2} \conv \Sp^{T_1}$ is simple, which implies $\eqref{Eq: Ex1}$.
\end{Ex}

\begin{Ex} Suppose that $n=5$. Let $\lambda = (2,2,1,1)$ and 
$$ T = \begin{tabular}{|c|c|}
     \hline
     1 & 3  \\
     \hline
     3 & 5  \\
     \hline
     4   \\
     \cline{1-1}
     5 \\
     \cline{1-1}
   \end{tabular}.
$$
Then $\mu := \lambda' = (4,2)$, and
$$
T_1 = \begin{tabular}{|c|}
     \hline
     1   \\
     \hline
     3   \\
     \hline
     4  \\
     \hline
     5 \\
     \cline{1-1}
   \end{tabular}\ , 
   \qquad 
   T_2 = \begin{tabular}{|c|}
     \hline
      3   \\
     \hline
     5   \\
     \cline{1-1}
   \end{tabular}\ .
$$
Using $\eqref{Eq: b}$, we have 
$$ 
\te^2 \mathsf{b}( T_2 \otimes T_1) = \te^2 (11 \otimes 01 \otimes 11 \otimes 00 \otimes 01 ) = 
11 \otimes 10 \otimes 11 \otimes 00 \otimes 10, 
$$
which
implies
 $\sigma(T_2 \otimes T_1) = T_1 \otimes T_2$.
 Note that $\xi_{T_1} = (1,1,1)$, $\xi_{T_2} = (3,2)$ and 
\begin{align*}
\qch \Sp^{T_1} &=  (2,3,4), \\
\qch \Sp^{T_2} &= (2,1,4,3,2)+ (2,4,1,3,2) + (2,4,3,1,2) + (4,2,1,3,2) + (4,2,3,1,2).
\end{align*}
 By Theorem \ref{Thm: app1}, 
 $$
\text{ $\Sp^{T_2} \conv \Sp^{T_1}$ is simple.}
 $$ 
\end{Ex}

\begin{Rmk}
Let 
$$
T_1 = \begin{tabular}{|c|}
     \hline
     1   \\
     \hline
     3   \\
     \hline
     4  \\
     \hline
     5 \\
     \cline{1-1}
   \end{tabular}\ , 
   \qquad 
   T_2 = \begin{tabular}{|c|}
     \hline
     2   \\
     \hline
     4   \\
     \cline{1-1}
   \end{tabular}\ .
$$
One can show that $T_2 \otimes T_1 \in C_{\Lambda_2, \Lambda_4}$ and $T_1 \otimes T_2 \in C_{\Lambda_4, \Lambda_2}$, but $ T_2 \otimes T_1 \not\simeq T_1 \otimes T_2 $.
Since 
$$ 
\te^2 \mathsf{b}( T_2 \otimes T_1) = \te^2 (01 \otimes 11 \otimes 01 \otimes 10 \otimes 01 ) = 
10 \otimes 11 \otimes 10 \otimes 10 \otimes 01, 
$$
we have  $\sigma(T_2 \otimes T_1) =  T_2' \otimes T_1' $, where 
$$
   T_1' = \begin{tabular}{|c|}
     \hline
     1   \\
     \cline{1-1}
     4   \\
     \cline{1-1}
   \end{tabular}\ , 
   \qquad 
T_2' = \begin{tabular}{|c|}
     \hline
     2   \\
     \hline
     3   \\
     \cline{1-1}
     4   \\
     \cline{1-1}
     5 \\
     \cline{1-1}
   \end{tabular}.
$$
\end{Rmk}

An algorithm was given in \cite{LF00} for computing the entries of the transition matrices between standard monomials and global basis via  the embedding 
$$
V_q(\Lambda) \longrightarrow V_q(\Lambda_1)^{\otimes m_1} \otimes_- \cdots \otimes_- V_q(\Lambda_{n-1})^{\otimes m_{n-1}},
$$
where $\Lambda = \sum_{k=1}^{n-1} m_k \Lambda_k $. 
Moreover, it turned out in \cite{Brun06} that  the entries of the transition matrix can be computed in terms of the Kazhdan-Lusztig polynomials.
Note that the entries also appear in composition multiplicities of the standard modules for the finite $W$-algebras/shifted Yangians \cite{BK08}.
Let $\lambda $ be a Young diagram with $\ell(\lambda)<n$, and $\mu = \lambda '$. We write $\mu  = (\mu_1, \ldots, \mu_r)$ and 
let $\Lambda := \sum_{k=1}^r  \Lambda_{\mu_k}$. Take $T_k \in \SST(\mu_k)$ for $k=1, \ldots, r$ and set $T := T_1 * \cdots *T_r$ to be the {\it column strict tableau} obtained by concatenating
$T_k$'s. 
Via the projection $ \pi :   V_q(\Lambda_{\mu_r}) \otimes_+ \cdots \otimes_+ V_q(\Lambda_{\mu_1})  \twoheadrightarrow  V(\Lambda) $, 
we write 
\begin{align} \label{Eq: module embedding}
\pi( G^{\up}_{\Lambda_{\mu_r}} (T_r) \otimes \cdots \otimes  G^{\up}_{\Lambda_{\mu_1}} (T_1) ) = \sum_{T' \in \SST(\Lambda) } A_{T,T'}(q) G^{\up}_{\Lambda} (T'),
\end{align} 
for some  $A_{T,T'}(q) \in \Z[q,q^{-1}]$.  
Then, $A_{T,T'}(q)$ is computed in \cite[Theorem 26]{Brun06} as follows:
\begin{align} \label{Eq: KL}
A_{T,T'}(q) = (-q)^{\ell(d_T) - \ell(d_{T'})} \sum_{z\in D_{\nu_T} \cap S_{\nu_T} d_T S_\mu } (-1)^{\ell(z) + \ell(d_{T'})} P_{z w_{|\lambda|}, d_{T'} w_{|\Lambda|}}(q^{-2}).
\end{align}
We explain the notations appeared in $\eqref{Eq: KL}$ briefly. We refer the reader to \cite{Brun06} for precise definitions and notations.  
$P_{x,y}(t)$ is the Kazhdan-Lusztig polynomials associated with the symmetric group.
For $\nu \in \Z_{>0}^r$, let $D_\nu$ be the set of {\it minimal length $S_\nu \backslash S_{|\nu|}$-coset representatives}.
For a column-strict tableau $T$, $\nu_T$ is the {\it content} of $T$,  $\gamma_T$ is the {\it column reading} of $T$ from top to bottom and from right to left, and 
$d_T$ is a unique element in $D_{\nu_T}$ such that $\gamma_T d_T^{-1}$ is weakly decreasing.  
We remark that upper crystal bases dealt in \cite{Brun06} are bases at $q=\infty$ while we use bases at $q=0$. 
Combining this with Theorem \ref{Thm: main} and Theorem \ref{Thm: app_tensor}, we have the following.

\begin{thm}  \label{Thm: KL poly}
Let $\lambda$ be a Young diagram with $\ell(\lambda)<n$ and $\mu := \lambda '$. We write $\mu  = (\mu_1, \ldots, \mu_r)$, and
set $\Lambda := \sum_{k=1}^r  \Lambda_{\mu_k}$.
For $T_k \in \SST(\mu_k)$ $(k=1, \ldots, r)$ and $T' \in \SST(\lambda)$, we have 
\begin{align*}
[\Sp^{T_r} \conv \cdots \conv \Sp^{T_1} : L(T') ]_q = q^{- \sum_{1 \le a < b \le r} (\beta_a, \Lambda_{\mu_b})} A_{T, T'}(q),
\end{align*}
where $\beta_k = \Lambda_{\mu_k}- \wt(T_k) $,  $T := T_1 * \cdots *T_r$ is the column strict tableau obtained by concatenating
$T_k$'s, and $A_{T, T'}(q)$ is given in $\eqref{Eq: KL}$.
\end{thm}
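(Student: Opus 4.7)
The plan is to iterate Corollary~\ref{Cor: main} to an $r$-fold version and then read off the multiplicity by comparing with the definition \eqref{Eq: module embedding} of $A_{T,T'}(q)$.

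First, I would prove by induction on $r$ that for $\nu_1,\ldots,\nu_r \in \wlP^+$ and modules $N_i \in R^{\nu_i}(\gamma_i)\gmod$,
\[
\pi_{\nu_1, \ldots, \nu_r}([N_1] \otimes \cdots \otimes [N_r])
\;=\; q^{\sum_{1 \le a < b \le r}(\gamma_b,\, \nu_a)}\, [N_1 \conv \cdots \conv N_r].
\]
The induction step uses the factorization $\pi_{\nu_1, \ldots, \nu_r} = \pi_{\nu_1+\cdots+\nu_{r-1},\, \nu_r} \circ (\pi_{\nu_1,\ldots,\nu_{r-1}} \otimes \mathrm{id})$, which comes from the associativity of the dual embeddings $\iota_{\cdot,\cdot}$ among tensor products. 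Applying Corollary~\ref{Cor: main} to the outer $\pi$ absorbs the factor $q^{(\gamma_r,\, \nu_1+\cdots+\nu_{r-1})} = q^{\sum_{a<r}(\gamma_r,\nu_a)}$, so together with the inductive hypothesis the cumulative exponent is precisely $\sum_{a<b}(\gamma_b,\nu_a)$. Proposition~\ref{Prop: conv} iterated ensures that the convolution lies in $R^{\nu_1+\cdots+\nu_r}\gmod$, so the formula makes sense.

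Next, I specialize to $N_i := \Sp^{T_{r+1-i}}$, $\nu_i := \Lambda_{\mu_{r+1-i}}$, $\gamma_i := \beta_{r+1-i}$, so that the convolution on the right becomes $\Sp^{T_r} \conv \cdots \conv \Sp^{T_1}$. Changing indices via $a' = r+1-b$, $b' = r+1-a$ converts $\sum_{a<b}(\gamma_b,\nu_a)$ into $\sum_{a'<b'}(\beta_{a'}, \Lambda_{\mu_{b'}})$. Since in the symmetric characteristic-$0$ setting the self-dual simple module $\Sp^{T_k}$ corresponds under $\gamma^\vee$ to the upper global basis element $G^{\up}_{\Lambda_{\mu_k}}(T_k)$ of $V_\A(\Lambda_{\mu_k})^\vee$ (and likewise $[L(T')] = G^{\up}_\Lambda(T')$), this yields
\[
\pi_{\Lambda_{\mu_r},\ldots,\Lambda_{\mu_1}}\!\bigl(G^{\up}_{\Lambda_{\mu_r}}(T_r) \otimes \cdots \otimes G^{\up}_{\Lambda_{\mu_1}}(T_1)\bigr)
\;=\; q^{\sum_{a<b}(\beta_a,\, \Lambda_{\mu_b})}\, [\Sp^{T_r} \conv \cdots \conv \Sp^{T_1}].
\]

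Finally, setting this equal to the defining expansion \eqref{Eq: module embedding} gives
\[
q^{\sum_{a<b}(\beta_a,\Lambda_{\mu_b})} [\Sp^{T_r}\conv \cdots \conv \Sp^{T_1}]
\;=\; \sum_{T'\in \SST(\lambda)} A_{T,T'}(q)\, [L(T')],
\]
and extracting the coefficient of $[L(T')]$ (the $G^{\up}_\Lambda(T')$-component) on both sides yields the claimed identity after multiplication by $q^{-\sum_{a<b}(\beta_a,\Lambda_{\mu_b})}$. The only delicate point is bookkeeping: the tensor factors in \eqref{Eq: module embedding} are indexed right-to-left while the convolution in the statement is indexed left-to-right, so the reindexing $a \leftrightarrow r+1-a$ must be carried out carefully to confirm that the accumulated grading shift from the iterated Corollary~\ref{Cor: main} matches exactly the exponent $\sum_{1\le a<b\le r}(\beta_a,\Lambda_{\mu_b})$ written in the theorem.
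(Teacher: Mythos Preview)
Your proposal is correct and follows essentially the same route as the paper, which does not give an explicit proof but merely indicates that the statement follows by combining Theorem~\ref{Thm: main} with (the $r$-fold formula established in the proof of) Theorem~\ref{Thm: app_tensor}. Your iteration of Corollary~\ref{Cor: main} to obtain $\pi_{\nu_1,\ldots,\nu_r}([N_1]\otimes\cdots\otimes[N_r])=q^{\sum_{a<b}(\gamma_b,\nu_a)}[N_1\conv\cdots\conv N_r]$ is exactly that formula, and the reindexing you perform to match the exponent $\sum_{a<b}(\beta_a,\Lambda_{\mu_b})$ with the right-to-left ordering in \eqref{Eq: module embedding} is the only bookkeeping required.
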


\begin{Ex} Let $n=5$, $\lambda = (4,3,1)$, $\mu = \lambda' = (3,2,2,1)$ and
$$ 
T =  \begin{tabular}{|c|c|c|c|}
     \hline
     1 & 1 &3&5 \\
     \hline
     2 & 2 &4\\
     \cline{1-3}
     3   \\
     \cline{1-1}
   \end{tabular}
   , \quad 
S =  \begin{tabular}{|c|c|c|c|}
     \hline
     1 & 1 &2&5 \\
     \hline
     2 & 3 &4\\
     \cline{1-3}
     3   \\
     \cline{1-1}
   \end{tabular}. 
$$
$T_k$ and $S_k$ are the $k$-th column of $T$ and $S$ from the left respectively.
Then we have 
$\xi_{T_1} = \xi_{T_2} = (0)$, $ \xi_{T_3}= (2,2)$, $\xi_{T_4} =(4)$, 
$ \xi_{S_1} = (0)$, $\xi_{S_2} = (1)$, $\xi_{S_3} = (2,1)$, $\xi_{S_4} = (4)$, and 
\begin{align*}
&\qch\Sp^{T_3} = ( 2,3,1,2 ) + ( 2,1,3,2 ), \ \  \qch\Sp^{T_4} = (4,3,2,1), \\
& \qch\Sp^{S_2} = (2), \ \  \qch\Sp^{S_3} = (3,1,2)+(1,3,2) ,\  \ \qch\Sp^{S_4} = (4,3,2,1).
\end{align*}
Note that $ \Sp^{T_1}$, $\Sp^{T_2}$ and $\Sp^{S_1} $ are trivial.
Using the table of the entries of the transition matrix computed in \cite[Table 1]{Brun06} or \cite[Section 5]{LF00}, 
we have 
\begin{align*}
A_{T,T'} (q) = 
\left\{
  \begin{array}{ll}
  1 & \text{ if } T' = T, \\
  0 &  \text{ otherwise,}
  \end{array}
\right.
\qquad 
A_{S,S'} (q) = 
\left\{
  \begin{array}{ll}
  q & \text{ if } S' = T, \\
  1 & \text{ if } S' = S, \\
  0 &  \text{ otherwise,}
  \end{array}
\right.
\end{align*}
Since 
\begin{align*}
\sum_{1 \le a < b \le 4} (\beta_a, \Lambda_{\mu_b}) = 1, \qquad   \sum_{1 \le a < b \le 4} (\gamma_a, \Lambda_{\mu_b}) = 2,
\end{align*}
where $ \beta_k = \Lambda_{\mu_k} - \wt(T_k)$ and $ \gamma_k = \Lambda_{\mu_k} - \wt(S_k)$,
we have 
\begin{equation*} 
\begin{aligned} 
& [\Sp^{T_4} \conv \Sp^{T_3} \conv \Sp^{T_2} \conv \Sp^{T_1}] = [\Sp^{T_4} \conv \Sp^{T_3} ] = q^{-1} [L(T)],  \\   
& [\Sp^{S_4} \conv \Sp^{S_3} \conv \Sp^{S_2} \conv \Sp^{S_1}] = [\Sp^{S_4} \conv \Sp^{S_3} \conv \Sp^{S_2}] = q^{-2} (  q[L(T)] + [L(S)]  ). 
\end{aligned}
\end{equation*}
Thus, $q (\Sp^{T_4} \conv \Sp^{T_3})$ is a self-dual simple $R$-module.
We may find directly how to embed $L(T)$ to $\Sp^{S_4} \conv \Sp^{S_3} \conv \Sp^{S_2}$ using the homomorphism 
\begin{align*}
\Sp^{S_4} \conv \Sp^{S_2} \conv \Sp^{S_3} \buildrel \mathrm{id} \conv r_2 \over \longrightarrow \Sp^{S_4} \conv \Sp^{S_3} \conv \Sp^{S_2}
\end{align*}
where $r_2$ is the $R$-matrix. 
It is easy to show that $\deg r_2 = 0$, $ \Sp^{T_4} \simeq \Sp^{S_4}$ and $ \Im r_2 \simeq \Sp^{T_3}$. 
Hence we have the embedding
\[
 q^{-1} L(T) \simeq  \Sp^{T_4} \conv \Sp^{T_3} \simeq \Im (\mathrm{id} \conv r_2) \rightarrowtail \Sp^{S_4} \conv \Sp^{S_3} \conv \Sp^{S_2}.
\]

\end{Ex}

\vskip 2em


\bibliographystyle{amsplain}


\end{document}